\newcommand{\inner}[2]{\left< #1 , #2 \right>}
\newif\ifrevised
\newcounter{results}[section]
\theoremstyle{plain}
\newtheorem{theorem}[results]{Theorem}
\newtheorem{lemma}[results]{Lemma}
\newtheorem{assumption}[results]{Assumption}
\theoremstyle{remark}
\newtheorem{remark}[results]{Remark}
\theoremstyle{definition}
\newtheorem{definition}[results]{Definition}
\renewcommand{\div}{\mathrm{div}}
\newcommand{\intd}{\int_{\mathbb{R}^d}}
\newcommand{\intt}{\int_0^T}
\newcommand{\intr}{\int_{B_R(0)}}
\renewcommand{\H}{H}
\newcommand{\e}{e}
\newcommand{\Q}{Q}
\newcommand{\J}{\boldsymbol{J}}
\title{
	Regularity and positivity of solutions of the Consensus-Based Optimization equation: unconditional global convergence
}
\author[1,2,3]{Massimo Fornasier\thanks{Email: \texttt{massimo.fornasier@cit.tum.de} }}
\author[1]{Lukang Sun\thanks{Email: \texttt{lukang.sun@tum.de} }}
\affil[1]{Technical University of Munich, School of Computation, Information and Technology, Department of Mathematics, Munich, Germany}
\affil[2]{Munich Center for Machine Learning, Munich, Germany}
\affil[3]{Munich Data Science Institute, Germany}
\begin{document}
	\maketitle
	\begin{abstract} 
		Introduced in 2017 \cite{B1-pinnau2017consensus}, Consensus-Based Optimization (CBO) has rapidly emerged as a significant breakthrough in global optimization. This straightforward yet powerful multi-particle, zero-order optimization method draws inspiration from Simulated Annealing and Particle Swarm Optimization. Using a quantitative mean-field approximation, CBO dynamics can be described by a nonlinear Fokker-Planck equation with degenerate diffusion, which does not follow a gradient flow structure.  
		
		In this paper, we demonstrate that solutions to the CBO equation remain positive and maintain full support. Building on this foundation, we establish the {\it unconditional} global convergence of CBO methods to global minimizers. Our results are derived through an analysis of solution regularity and the proof of existence for smooth, classical solutions to a broader class of drift-diffusion equations, despite the challenges posed by degenerate diffusion.
	\end{abstract}

	{\noindent\small{\textbf{Keywords:} global optimization, 
			consensus-based optimization, 
			nonlinear Fokker--Planck equations, degenerate diffusion, regularity and positivity of solutions}}\\
	
	{\noindent\small{\textbf{AMS subject classifications:} 35B65, 35Q84, 35Q91, 35Q93, 37N40, 60H10}}
	\tableofcontents

	\section{Introduction}

	Traditional mathematical optimization techniques, such as gradient descent and quasi-Newton methods, rely on local higher-order information to guide the search for minimizers. However, due to their reliance on local information, these approaches often fail when applied to nonconvex and nonsmooth problems, which pose the greatest challenge in mathematical optimization.
	Metaheuristic algorithms, including Simulated Annealing, Genetic Algorithms, Particle Swarm Optimization, Ant Colony Optimization, and various evolutionary strategies \cite{GendPotv13,blum2003metaheuristics}, have demonstrated remarkable empirical success and have been extensively evaluated on benchmarking platforms like COCO \cite{COCO21}. Despite their success, most of these methods lack strong theoretical guarantees or reliable convergence rates, particularly for high-dimensional global nonconvex optimization problems. Effectively overcoming nonconvexity and locality barriers with theoretical guarantees would enable us to tackle applications that have, until now, been beyond the reach of {\it rigorous} mathematical approaches.
	
	A major breakthrough in global optimization occurred in 2017 with the introduction of Consensus-Based Optimization (CBO) \cite{B1-pinnau2017consensus}. CBO is a simple, zero-order, multi-particle optimization method inspired by the principles of Simulated Annealing and Particle Swarm Optimization. 
	{As with other metaheuristic techniques, the flexibility and simplicity of CBO have played a key role in its increasing use for solving a broad spectrum of optimization problems in fields such as machine learning, engineering, and the sciences. These applications span global optimization with constraints \cite{fornasier2020consensus_hypersurface_wellposedness,B1-fornasier2020consensus_sphere_convergence,bae2022constrained,beddrich2024,B1-carrillo2021consensus,B1-borghi2021constrained,B1-ha2021emergent}, minimizing cost functions with multiple optima \cite{bungert2022polarized,B1-fornasierSun25}, addressing multi-objective formulations \cite{borghi2022consensus,borghi2022adaptive,totzek-multio}, handling stochastic optimization scenarios \cite{bellavia25,bonandin2024}, solving high-dimensional problems in machine learning \cite{B1-carrillo2019consensus}, and sampling from complex distributions \cite{B1-carrillo2022sampling}. The framework has been further refined through the integration of enhancements such as momentum terms \cite{CiCP-31-4}, memory components \cite{B1-totzeck2020consensus,riedl2022leveraging}, second-order dynamics \cite{2ndorder,B1-cipriani2021zero,B1-grassi2021mean,grassi2020particle,hui23}, optimal control mechanisms \cite{huang2024fast}, mirror descent methods \cite{bungert2025mirror}, random batch strategies \cite{ko2022convergence}, truncated noise models \cite{fornasier2023consensus1}, and jump processes \cite{kalise2022consensus}. Moreover, it has been effectively applied to multi-agent games \cite{chenchene2024}, min-max optimization \cite{B1-qiu2022Saddlepoints}, hierarchical or multi-level problems \cite{multilevel,trillos2025cb2o}, and clustered federated learning \cite{JMLR:v25:23-0764}\footnote{The body of literature on CBO and its testing on applications is growing rapidly, making it challenging to track and report all the latest developments. Here, we cite a selection of recent noteworthy contributions. We refer to \cite{totzeck2021trends} for a  review and \cite{totzeck2018numerical} for a comparison with other multi-agent global optimization algorithms.}.
	}
	CBO not only demonstrates strong empirical success in solving a wide range of global optimization problems  but, even more surprisingly, also comes with rigorous theoretical guarantees and convergence rates for a broad class of nonconvex and nonsmooth optimization problems; see our discussion below for details.
	
	The equations defining the iterates $V^i_k$ of the CBO algorithm read ($i=1,\dots,N$ labelling the particles, and $k=0,\dots,K$ denoting the iterates)
	\begin{align}
		\label{eq:B1-CBO1}
		V^i_{k+1} &= V^i_{k} - \Delta t \lambda(V^i_k-v_{\alpha}(\widehat \rho_k^N)) + \sqrt{\Delta t}\sigma \|V^i_k-v_{\alpha}(\widehat  \rho_k^N)\|_2  B_k^i, 
	\end{align}
	where
	\begin{align}
		v_{\alpha}(\widehat \rho_k^N) = \frac{1 }{\sum_{i=1}^N \omega_\alpha^f(V_k^i)} \sum_{i=1}^N V_k^i \omega_\alpha^f(V_k^i), 
	\end{align}
	and $f$ is the objective function to be minimized, $v_{\alpha}(\widehat \rho_k^N)$ is the so-called consensus point, $\omega_\alpha^f(v)=e^{-\alpha {f}(v)}$ is  the Gibbs weight, and $B^i_k$ is an independent standard Gaussian random vector. The initial {particles} $V_0^i$ are drawn i.i.d. at random according to a given probability distribution $\rho_0$.

	The algorithm \eqref{eq:B1-CBO1} has a straightforward formulation (it can be implemented in just a few lines of code \cite{CBX2024code}) with a complexity of $\mathcal O(N)$ and is highly parallelizable; see, for instance, \cite{benfenati22,JMLR:v25:23-0764}. Additionally, it relies exclusively on pointwise evaluations of the objective function $f$, meaning that no higher-order derivatives are required.  
	
	The core mechanism of CBO involves a combination of a relaxed drift toward the weighted mean of the particles and their stochastic perturbation, embodying an exploitation-exploration strategy. Notably, the original formulation of CBO, given in equation \eqref{eq:B1-CBO1}, was developed under the restrictive assumption that the objective function $f$ possesses a {\it unique} minimizer $v^*$. More recent adaptations extend the approach to accommodate multiple global minimizers \cite{bungert2022polarized,B1-fornasierSun25}.  
	
	The proofs of convergence to global minimizers as devised in \cite{carrillo2018analytical,B1-fornasier2021global} rely on the following three-level approximation argument. The first approximation goes for $\Delta t \to 0$: standard results, see \cite{platen1999introduction}, allow to interpret \eqref{eq:B1-CBO1} as  the Euler--Maruyama numerical approximation  for $\Delta t \to 0$ to solutions $V_t^i$, $i=1,\dots,N$ of the empirical system
	\begin{equation}
		d {V}^i_t
		=-\lambda\left(V^i_t-v_\alpha(\hat \rho_t^N )\right)dt+\sigma \|V^i_t-v_\alpha(\widehat \rho^N_t)\|_2 d B_t^i, \quad v_\alpha(\widehat \rho_t^N ) =  \frac{1 }{\sum_{i=1}^N \omega_\alpha^f(V_t^i)} \sum_{i=1}^N V_t^i \omega_\alpha^f(V_t^i).
	\end{equation}
	The second approximation level comes as a quantitative mean-field limit for $N \to \infty$, see \cite[Proposition 3.11]{B1-fornasier2021global} and also \cite[Theorem 2.6]{gerber2023propagation},
	\begin{equation}\label{eq:qmfl}
		\sup_{t \in [0,T]}\sup_{i=1,\dots,N} \mathbb E\left [ \| \overline V^i_t -V^i_t\|_2^2\right ]  \leq CN^{-1}, 
	\end{equation}
	{where $\overline V^i_t$ are $N$ particles satisfying the self-fulfilling mono-particle dynamics
		\begin{equation}\label{eq:B1-CBO2}
			d \overline{V}_t
			=-\lambda\left(\overline{V}_t-v_\alpha(\rho_t)\right)dt+\sigma \|\overline{V}_t-v_\alpha(\rho_t)\|_2 d B_t,
		\end{equation}
		with $\overline V^i_0=V_0^i, i=1,\ldots,N$,}   
	and  $v_\alpha(\rho)=\frac{\int v \omega^{f}_\alpha(v) d\rho(v)}{\int \omega^f_\alpha(v) d\rho(v)}$ and $\rho_t= \operatorname{Law}(\overline{V}_t)$, which fulfills the nonlinear Fokker--Planck equation
	\begin{align} \label{eq:B1-fokker_planck}
		\partial_t\rho_t
		= \lambda\operatorname{div} \big(\!\left(v - v_\alpha(\rho_t)\right)\rho_t\big)
		+ \frac{\sigma^2}{2}\Delta\big(\|v-v_\alpha(\rho_t)\|_2^2\rho_t\big),
	\end{align}
	{here $v\in\mathbb{R}^d$ is the variable of the equation.} Notably, the existence of probability measure solutions of \eqref{eq:B1-fokker_planck} was established in \cite{carrillo2018analytical} throught the analysis of \eqref{eq:B1-CBO2}. 
	The third and last approximation level is provided by the large time asymptotics of the solution $\rho_t$. Namely, the global convergence is resulting from the quantification of the approximation $v_\alpha(\rho_t) \approx v^*$ at all times, for $\alpha>0$ large enough, obtained  through a quantitative Laplace's principle \cite[Proposition 4.5]{B1-fornasier2021global} and the crucial technical assumption that $\rho_0$ has mass around 
	the global minimizer $v^*$, i.e.,
	\begin{equation}\label{eq:techass}
		\rho_0(B_r(v^*)) >0, \quad r>0.
	\end{equation}
	See \cite[Proposition 4.6]{B1-fornasier2021global} for details.
	Consequently, under mild assumptions on the objective function $f$ --local Lipschitz continuity and a local growth around the global minimizer--, \cite[Theorem 3.7]{B1-fornasier2021global} shows that the Wasserstein distance $W_2^2(\rho_t, \delta_{v^*})$ is a natural Lyapunov functional for \eqref{eq:B1-fokker_planck} at finite time, with exponential decay
	\begin{equation}\label{eq:contr}
		W_2^2(\rho_t, \delta_{v^*}) \leq W_2^2(\rho_0, \delta_{v^*}) e^{-(1-\theta)(2\lambda - d \sigma^2) t}, 
	\end{equation}
	for a suitable $\theta \in (0,1)$ for $t \in [0,T^*]$ and 
	\begin{equation}\label{eq:finerr}
		W_2^2(\rho_{T^*}, \delta_{v^*}) \leq \epsilon.
	\end{equation}
	The combination of these three level approximation as in \cite[Theorem 3.8]{B1-fornasier2021global} yields the quantitative convergence in probability of the algorithm \eqref{eq:B1-CBO1}
	\begin{equation}\label{B1-ownresult2}
		\mathbb P\Bigg( \|\frac{1}{N} \sum_{i=1}^N V_{K}^i-v^*\| \leq \varepsilon\Bigg) \geq 1 - \left [ \varepsilon^{-1} (C_{\mathrm{NA}}\Delta t+C_{\mathrm{MFA}} N^{-1}+\epsilon) -\delta \right ].
	\end{equation}
	
	At this point, we turn our attention to the technical assumption \eqref{eq:techass}, which was introduced in the earlier work \cite{carrillo2018analytical,B1-fornasier2021global}. This condition constrains the initial distribution \(\rho_0\) used for sampling particles, requiring it to include the global minimizer \(v^*\) in its support (for instance $\rho_0$ may be a Gaussian distribution). However, from the results of practical numerical experiments, this assumption did not seem necessary for the method to achieve global convergence. This suggests that CBO actually converges {\it unconditionally}.
	The difficulty in providing an obvious statement about condition \eqref{eq:techass} comes from the degeneracy of the diffusion in equation \eqref{eq:B1-fokker_planck}. 
	\\
	
	One of the results of this paper is to prove that, as long as $\rho_0$ is a smooth density--not necessarily requiring $v^* \in \operatorname{supp}(\rho_0)$)-- {and the dimensionality of the problem $d>1$},  the density $\rho_t(v)>0$ for all $v \in \mathbb R^d \setminus \{v_\alpha(\rho_t)\}$ and for all $t>0$, see Theorem \ref{thm:positivity}. Hence, this positivity property renders \eqref{eq:techass} completely superfluous and, combined with \cite[Theorem 3.7, Theorem 3.8]{B1-fornasier2021global}, we can actually establish the {\it unconditional} global convergence of CBO to global minimizers, regardless of whether the initial distribution has mass near the global minimizers.\\
	
	The positivity result we obtain leverages  regularity results of solutions of \eqref{eq:B1-fokker_planck}, in particular we show existence and uniqueness of classical solutions. Although, we believe that one could obtain positivity even without such regularity results, the second goal of this paper is in fact to have a finer description of the solution $\rho_t$. Indeed, while existence of probability measure solutions was established in \cite{carrillo2018analytical}, neither well-posedness nor regularity results were known so far.
	We will do that by proving intermediate results for more general (linear) drift-diffusion equations with degenerate diffusion of the type \eqref{eq:g1}-\eqref{eqg:2}. Given their relevance in the development of the paper we consider these findings the main results of the paper. The regularity will be consequence of a nontrivial interplay between the degenerate diffusion and  drift terms. As long as the diffusion term ``dominates" the drift in a controlled manner (see Assumption \ref{asp:g1}), then there is no solution degeneracy occurring. 
	Before describing the main results in details, let us introduce the notations of the paper.

	\paragraph{Notations.} We use $\mathbb{N}_0^d$ to denote a $d$-dimension vector with each element non-negative integer.  If $\boldsymbol{\nu}=\left(\nu_1, \ldots, \nu_d\right) \in \mathbb{N}_0^d$ and
	$\mathbf{z}=\left(z_1, \ldots, z_d\right) \in \mathbb{R}^d$,
	then $|\boldsymbol{\nu}|  =\sum_{i=1}^d \nu_i$, $\boldsymbol{\nu}! =\prod_{i=1}^d\left(\nu_{i}!\right)$, $h^{(\boldsymbol{\nu})}=\frac{\partial^{|\boldsymbol{\nu}|}}{\partial v_1^{\nu_1} \cdots \partial v_d^{\nu_d}} h, \boldsymbol{w}^{(\boldsymbol{\nu})}=\left(w_1^{(\boldsymbol{\nu})},\ldots,w_m^{(\boldsymbol{\nu})}\right)$, here $\boldsymbol{w}=(w_1,\ldots,w_m)$, $\mathbf{z}^{\boldsymbol{\nu}} =\prod_{i=1}^d z_i^{\nu_i}$. We use $\mathrm{I}_d$ to denote the $d-$dimensional identity matrix. In the differentiation, for convenience, sometimes, we will use $h^{(s)}$ or $\partial^{s}h$ to denote $h^{(\boldsymbol{\nu})}$ for some $\boldsymbol{\nu}\in\mathbb{N}_0^d$ with $|\boldsymbol{\nu}|=s$, we use $\sum_{s\leq m}h^{(s)}$ to denote $\sum_{\boldsymbol{\nu}\in \{\boldsymbol{\nu}\in\mathbb{N}_0^d:|\boldsymbol{\nu}|\leq m\}}h^{(\boldsymbol{\nu})}$. We use $\norm{\cdot}$ to denote the Euclidean norm on Euclidean space. Unless otherwise specified, we use $C_{\cdot},C(\cdot),C$ to denote some generic constants that do not depend on other explicit parameters~($B_i$ or $K$ to be defined later).

	\section{Main Results}
	We first introduce the main theorem of this work, then we prove it using Galerkin method. In the end, we apply the main theorem to CBO dynamics.
	\subsection{Main theorem}
	Let $G(v,t),g(v,t)$ be scalar functions and $\J(v,t)$ be vector valued function, we study the following equations:
	\begin{equation}\label{eq:g1}
		\begin{aligned}
			&\partial_t\rho(v,t)=\div\Big(G(v,t)\nabla\rho(v,t)\Big)+\inner{\J(v,t)}{\nabla\rho(v,t)}+\rho(v,t)+g(v,t),\\
			&\rho(v,0)=\varrho(v)\in H^m(\mathbb{R}^d),\quad\forall m\geq 0,
		\end{aligned}
	\end{equation}
	and 
	\begin{equation}\label{eqg:2}
		\begin{aligned}
			&\partial_t\rho(v,t)=\div\Big(G(v,t)\nabla\rho(v,t)\Big)-\div{\Big(\J(v,t)\rho(v,t)\Big)}+\rho(v,t)+g(v,t),\\
			&\rho(v,0)=\varrho(v)\in H^m(\mathbb{R}^d),\quad\forall m\geq 0.
		\end{aligned}
	\end{equation}
	Let us first define several function classes.
	\begin{definition}Function class $\Gamma$ is defined by
		\begin{equation}
			\begin{aligned}
				\Gamma&:=\Big\{\phi:\forall T>0, \phi\in C^{2,1}(\mathbb{R}^d\times [0,T], \phi\in  W^{1,\infty}(0,T;H^m(\mathbb{R}^d)), \intd G^2\norm{\phi^{(m)}}^2dv<\infty,\\
				&\forall t\in [0,T],m\geq 0\text{ and }\intt\intd G^3\norm{\phi^{(m)}}^2dvdt<\infty, \forall m\geq0\Big\}.
			\end{aligned}
		\end{equation}
	\end{definition}
	
	\begin{definition} 		
		Function class $\Gamma'$ is defined by
		\begin{equation}
			\begin{aligned}
				\Gamma'&:=\Big\{\phi:\forall T>0, \phi\in C^{2,1}(\mathbb{R}^d\times [0,T]): \phi\in  L^{\infty}(0,T;H^1(\mathbb{R}^d)),\text{ and }\\
				&\quad \intt\intd G\norm{\nabla\phi}^2dvdt<\infty, \intt\intd G\sum_{i=1}^d\sum_{j=1}^d|\partial_i\partial_j\phi|^2dvdt<\infty\Big\}.
			\end{aligned}
		\end{equation}
	\end{definition}
	\begin{definition}Function class $\Gamma''$ is defined by
		\begin{equation}
			\begin{aligned}
				\Gamma''&:=\Big\{\phi: \forall T>0, \phi\in C^{2,1}(\mathbb{R}^d\times [0,T]): \phi\in  L^{\infty}(0,T;H^m(\mathbb{R}^d))\cap W^{1,\infty}(0,T;H^m_{loc}(\mathbb{R}^d)),\\
				&\intt\intd G\norm{\nabla\phi^{(m)}}^2dv<\infty, \forall m\geq 0\Big\}.
			\end{aligned}
		\end{equation}
	\end{definition}

	The next is the definition of weak solution used in this paper.
	\begin{definition}
		We say $\rho$ is a weak solution to equation \eqref{eq:g1}, if $\rho(\cdot,t)$ is a Radon measure and for any test function $\psi\in \Gamma$, we have
		\begin{equation}
			\begin{aligned}     &\int_0^{s}\intd\Big(\partial_t\psi(v,t)+\div(G(v,t)\nabla\psi(v,t))-\div({\J(v,t)}{\psi(v,t)})+\psi(v,t)\Big)\rho(v,t)dvdt\\
				&=\intd\rho(v,s)\psi(v,s)dv-\intd\varrho(v)\psi(v,0)dv-\intt\intd g(v,t)\psi(v,t)dvdt,
			\end{aligned}
		\end{equation}
		for any $s\in [0,\infty)$. We say $\rho$ is a weak solution to equation \eqref{eqg:2}, if $\rho(\cdot,t)$ is a Radon measure and for any test function $\psi\in \Gamma$, we have
		\begin{equation}
			\begin{aligned}
				&\int_0^{s}\intd\Big(\partial_t\psi(v,t)+\div(G(v,t)\nabla\psi(v,t))+\inner{\J(v,t)}{\nabla\psi(v,t)})+\psi(v,t)\Big)\rho(v,t)dvdt\\
				&=\intd\rho(v,s)\psi(v,s)dv-\intd\varrho(v)\psi(v,0)dv-\intt\intd g(v,t)\psi(v,t)dvdt,
			\end{aligned}
		\end{equation}
		for any $s\in [0,\infty)$.
	\end{definition}

	We proved the following the theorem.
	\begin{theorem}[\textbf{Main Theorem}]\label{thm:main}
		If Assumptions \ref{asp:g1} and \ref{asp:g2} are satisfied, equation \eqref{eq:g1}~(and equation \eqref{eqg:2}) has a unique classical solution in function class $\Gamma'$, and this solution is in function class $\Gamma''$. If Assumptions \ref{asp:g1}, \ref{asp:g2} and \ref{asp:g3} are satisfied, equation \eqref{eq:g1}~(and equation \eqref{eqg:2}) has a unique week solution, and this weak solution is in function class $\Gamma$.
	\end{theorem}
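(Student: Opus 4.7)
The plan is a Galerkin approximation scheme that closes $H^m$-energy estimates at every order $m\geq 0$, together with a mollification argument for the weak-solution part under Assumption \ref{asp:g3}. I would give the proof in detail for equation \eqref{eq:g1}; equation \eqref{eqg:2} then follows from the identity $\div(\J\rho)=\inner{\J}{\nabla\rho}+(\div\J)\rho$, which absorbs $(\div\J)\rho$ into the zero-order term, provided Assumption \ref{asp:g1} supplies the needed regularity of $\div\J$. Pick an orthonormal basis $\{e_k\}_{k\geq 1}$ of $L^2(\mathbb{R}^d)$ made of Schwartz functions that is simultaneously dense in every $H^m(\mathbb{R}^d)$ (e.g.\ the Hermite basis), and look for approximate solutions $\rho^n(v,t)=\sum_{k=1}^n c_k^n(t)e_k(v)$. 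Because $G,\J,f$ are continuous in $t$ under Assumption \ref{asp:g1}, projecting \eqref{eq:g1} onto $\operatorname{span}\{e_1,\dots,e_n\}$ yields a linear ODE system for $(c_1^n,\dots,c_n^n)$ with continuous coefficients, uniquely solvable on $[0,T]$.

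Next I would close uniform-in-$n$ estimates in every $H^m$. Applying $\partial^{\boldsymbol{\nu}}$ to the equation, pairing with $\partial^{\boldsymbol{\nu}}\rho^n$ in $L^2$, and integrating by parts isolates the only positive gradient-production term, the dissipation $\intd G\,\norm{\nabla\partial^{\boldsymbol{\nu}}\rho^n}^2 dv$ inherited from $\div(G\nabla\rho)$, balanced against commutators $[\partial^{\boldsymbol{\nu}},\div(G\nabla\cdot)]\rho^n$ and $[\partial^{\boldsymbol{\nu}},\inner{\J}{\nabla\cdot}]\rho^n$ plus zero-order pieces from $f$ and $\rho$. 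The structural content of Assumptions \ref{asp:g1}--\ref{asp:g2}, read as pointwise bounds of the form $\norm{\nabla G}^2\lesssim G$ and $\norm{\J}^2\lesssim G$ (with analogous controls on higher derivatives of $G,\J$), allows each commutator to be absorbed by the good dissipation through weighted Cauchy--Schwarz, leaving only contributions controlled by lower-order norms. Gronwall then produces uniformly in $n$
\begin{equation*}
\|\rho^n(\cdot,t)\|_{H^m}^2 + \int_0^t\!\!\intd G\,\|\nabla\rho^n(\cdot,s)\|_{H^m}^2\,dv\,ds \;\leq\; C_m\Bigl(\|\varrho\|_{H^m}^2+\|f\|_{L^2(0,t;H^m)}^2\Bigr)e^{C_m t},
\end{equation*}
which are exactly the bounds encoding membership in $\Gamma''$.

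Passing $n\to\infty$ via weak-$\ast$ compactness in $L^\infty(0,T;H^m)$ and weak compactness of $\sqrt{G}\,\nabla\rho^n$ in $L^2$, combined with Aubin--Lions compactness on bounded cubes in $v$, extracts a limit $\rho$ satisfying \eqref{eq:g1} distributionally. Taking $m$ large and invoking Sobolev embedding upgrades $\rho$ to $C^{2,1}(\mathbb{R}^d\times[0,T])$, so that the equation holds pointwise; this is the classical solution in $\Gamma'\cap\Gamma''$. Uniqueness in $\Gamma'$ follows from the same energy identity applied to the difference of two such solutions: the $\Gamma'$-integrability is tailored to make every term in the identity finite, and the usual Gronwall argument closes. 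For the weak-solution statement, under the additional Assumption \ref{asp:g3} I would mollify $(\varrho,f)$, invoke the classical theory just established, and pass to the limit in the duality pairing against test functions $\psi\in\Gamma$; the weighted conditions $\intd G^2\norm{\psi^{(m)}}^2 dv<\infty$ and $\intt\intd G^3\norm{\psi^{(m)}}^2 dv\,dt<\infty$ built into $\Gamma$ are precisely what renders the integration-by-parts identities meaningful when $\rho$ is only a Radon measure, while Assumption \ref{asp:g3} supplies the matching growth/decay of $G,\J,f$ needed to preserve these bounds in the limit.

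The main obstacle is the higher-order energy estimate when $G$ is degenerate and may vanish. The commutators in Step~2 produce terms of the form $\intd|\nabla G|\,\norm{\nabla\rho^n}\,|\partial^{\boldsymbol{\nu}}\rho^n|\,dv$ together with drift residues $\intd|\J|\,|\partial^{\boldsymbol{\nu}+e_i}\rho^n|\,|\partial^{\boldsymbol{\nu}}\rho^n|\,dv$, which are absorbable by $\intd G\,\norm{\nabla\partial^{\boldsymbol{\nu}}\rho^n}^2 dv$ only through pointwise structural relations amounting to $\sqrt{G}$ being Lipschitz and $\J/\sqrt{G}$ being bounded, together with analogous controls on every higher derivative of $G$ and $\J$. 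Making this bookkeeping go through at every Sobolev order, in both the gradient and the divergence formulations of the drift, with constants independent of the Galerkin level $n$ and then also independent of the mollification parameter in the weak-solution step, is where the technical heart of the proof lies and where Assumptions \ref{asp:g1}--\ref{asp:g3} are crucially used.
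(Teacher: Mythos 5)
Your plan and the paper's proof are both Galerkin schemes with $H^m$-energy estimates, but the two differ at a point where your plan breaks. You propose doing Galerkin on $\mathbb{R}^d$ directly with a Hermite (or more generally Schwartz) basis, then closing $H^m$ estimates by pairing $\partial^{\boldsymbol{\nu}}$ of the equation with $\partial^{\boldsymbol{\nu}}\rho^n$. At the finite-dimensional Galerkin level this pairing is only legitimate if the test function lies in the span $\operatorname{span}\{e_1,\dots,e_n\}$; for the $H^0$ estimate the test function is $\rho^n$ itself, which is fine, but for $m\geq 1$ the test function is effectively $\partial^{2\boldsymbol{\nu}}\rho^n$, and for a Hermite basis differentiation shifts out of the span (Hermite recurrences involve $e_{n+1}$). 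So the higher-order Galerkin energy estimate you rely on does not close. The paper sidesteps this by working on tori $B_i=[-n_i^2,n_i^2]^d$ with the trigonometric basis $H_{i,K}$, which is \emph{closed under differentiation}, so that $\psi=(\partial^m)^2\rho_i^K$ is a valid Galerkin test function for every $m$. This is not a cosmetic choice — it is what makes the $H^m$ estimates provable at the approximation level.

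A second missing piece is the truncation $G\mapsto G_i=H_i^2\overline{G}_i$ and $\J\mapsto\J_i=H_i\overline{\J}_i$ developed in Lemmas \ref{lem:g25}--\ref{lem:g4}. Under Assumption \ref{asp:g1} alone $G$ may grow exponentially in $|v|$, and your Hermite-based stiffness matrix entries such as $\intd\inner{G\nabla e_k}{\nabla e_j}\,dv$ and, more seriously, the bound $\intd\bigl(\partial_t\partial^m\rho^n\bigr)^2 dv\lesssim \norm{\rho^n}^2_{H^{m+2}}$ used to get $W^{1,\infty}(0,T;H^m)$ control need $G$ to be bounded on the working domain. The paper's cutoffs give $G_i\leq n_i+1$ uniformly, vanish near $\partial B_i$ (so the periodic extension is smooth and integration by parts has no boundary terms), and still satisfy the structural relations of Lemma \ref{lem:g4} with constants uniform in $i$; these are then matched by Assumption \ref{asp:g2} when passing $i\to\infty$. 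Your plan makes no provision for this and would need an additional layer of truncation/limit to be rigorous. Finally, for the weak-solution statement you describe an existence argument by mollification of $(\varrho,f)$, but you do not address \emph{uniqueness} of the weak solution; the paper proves this by solving the time-reversed adjoint equation (using the classical existence result just established, applied to the adjoint under Assumption \ref{asp:g3}) and pairing by duality against the difference of two weak solutions. That adjoint/duality step is an essential, separate idea and is absent from your outline.
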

	The proof is based on a Galerkin approximation; to perform such an approximation, we first introduce the assumptions in the next section.
	
	\subsection{Assumptions and their derivations}\label{sec:aspaddr}
	In this section, we list the assumptions that $G,\J,g$ should be satisfied, and several key lemmas based on these assumptions.
	\begin{assumption}\label{asp:g1}
		\begin{itemize}
			\item For each fixed $t\in [0,\infty)$, $J(\cdot,t),G(\cdot,t),g(\cdot,t)$ are smooth on $\mathbb{R}^d$ and for each fixed $v\in\mathbb{R}^d$,$J(v,\cdot),G(v,\cdot),g(v,\cdot)$ are locally H\"older continuous in time with H\"older component $\alpha\in [0,1]$, $G\geq 0,g\in L^{\infty}(0,T;H^m(\mathbb{R}^d))$, for any $m\geq0,T>0$.
			\item For each fixed $T>0$, there exist uniform constants, which may depend on $T$, such that for each $v\in\mathbb{R}^d,t\in [0,T]$, we have
			\begin{equation*}
				\begin{aligned}
					&\norm{G^{(1)}}\leq CG^{1/2}(1+G^{1/2});\\
					&\norm{G^{(k)}}\leq C(1+G), \quad k\geq 2\text{ this $C$ may depend on $k$};\\
					&\norm{\J}\leq CG^{1/2};\\
					&\norm{\J^{(k)}}\leq C(1+G^{1/2}),\quad k\geq 1\text{ this $C$ may depend on $k$},
				\end{aligned}
			\end{equation*}
			here the differentiation is only in the space variable.
		\end{itemize}
	\end{assumption}

	Since $G$ is smooth on $\mathbb{R}^d$, so if we fix $t\in [0,\infty)$, the following two sequences always exist:
	\begin{itemize}
		\item a positive sequence $\{n_i\}_{i=1}^{\infty}$ with $n_i\to\infty$ as $i\to\infty$;
		\item a positive sequence $\{R_i\}_{i=1}^{\infty}$ with $R_i\to\infty$ as $i\to\infty$ and $G(v,t)\leq n_i$ for any $v\in B_{R_i}(0)$.
	\end{itemize} 
	
	The next one is some technical assumption on $G$.
	\begin{assumption}\label{asp:g2}For any fixed $T>0$, there exist two positive diverging sequences $\{n_i\}_{i=1}^{\infty}$ and $\{R_i\}_{i=1}^{\infty}$, such that for each $t\in [0,T]$, we have $G(v,t)\leq n_i$ for any $v\in B_{R_i}(0)$; and for any $v\in B_{R_i}(0)\setminus B_{R_i-1}(0)$, we have $G(v,t)\leq C_1 \Big(1+G(\frac{R_i v}{\norm{v}},t)\Big), \Big(1+G(\frac{R_i v}{\norm{v}},t)\Big)\leq C_2G(v,t)$, here constants are are uniform for $t\in [0,T]$.
	\end{assumption}
	
	\begin{remark}
		Following the proof {of Theorem \ref{thm:main}}, it is not hard to see the main theorem also holds when $G(v,t)$ is a {symmetric} square matrix. In this case, the assumptions should be modified in a matrix form: \begin{equation*}
			\begin{aligned}
				&- G\text{ is non-negative definite };\\
				&-CG^{1/2}(1+G^{1/2})\leq G^{(1)}\leq CG^{1/2}(1+G^{1/2});\\
				&-C(1+G)\leq G^{(k)}\leq C(1+G), \quad k\geq 2\text{ this $C$ may depend on $k$};\\
				& - \J\J^\top\leq CG;\\
				& - (\J^{(k)})(\J^{(k)})^\top\leq C(1+G),\quad k\geq 1\text{ this $C$ may depend on $k$},
			\end{aligned}
		\end{equation*}
		here matrix inequality $A\leq B$ means $v^\top Av\leq v^\top Bv$, for any $v\in\mathbb{R}^d$. The condition $G(v,t)\leq n_i$ in Assumption \ref{asp:g2} should be adapted as $G(v,t)\leq n_i\mathrm{I}_d$.
	\end{remark}

	In  such assumptions, for any fixed $T>0$, the constants are all uniform with respect to $t\in [0,T]$. So, for simplicity, in the next four lemmas, we will omit $t$ in $G(v,t),\J(v,t)$ and just write them as $G(v),\J(v)$.
	
	The following multivariable Faa Di Bruno formula is from \cite[Theorem 2]{constantine1996multivariate}. Let $h(v)=z(u)$, where $u=\boldsymbol{w}(v)=\left(w_1(v),\ldots,w_m(v)\right)\in \mathbb{R}^m$, a linear order on $\mathbb{N}_0^d$ is: if $\boldsymbol{\mu}=\left(\mu_1, \ldots, \mu_d\right)$ and $\boldsymbol{\nu}=$ $\left(\nu_1, \ldots, \nu_d\right)$ are in $\mathbb{N}_0^d$, we write $\boldsymbol{\mu} \prec	\boldsymbol{\nu}$ provided one of the following holds:
	(i) $|\boldsymbol{\mu}|<|\boldsymbol{\nu}|$;
	(ii) $|\boldsymbol{\mu}|=|\boldsymbol{\nu}|$ and $\mu_1<\nu_1$; or
	(iii) $|\boldsymbol{\mu}|=|\boldsymbol{\nu}|, \mu_1=\nu_1, \ldots, \mu_k=\nu_k$ and $\mu_{k+1}<\nu_{k+1}$ for some $1 \leq k<d$.
	
	With these notations, the multivariable Faa Di Bruno formula reads:
	\begin{equation}
		h^{(\boldsymbol{\nu})}=\sum_{1 \leq|\boldsymbol{\lambda}| \leq n} z^{(\boldsymbol{\lambda})}  \sum_{p(\boldsymbol{\nu}, \boldsymbol{\lambda})}(\boldsymbol{\nu}!) \prod_{j=1}^n \frac{\left[\boldsymbol{w}^{(\boldsymbol{\ell}_j)}\right]^{\mathbf{k}_j}}{\left(\mathbf{k}_{j}!\right)\left[\boldsymbol{\ell}_{j}!\right]^{\left|\mathbf{k}_j\right|}},
	\end{equation}where $\boldsymbol{\nu}\in\mathbb{N}_0^d,\boldsymbol{\lambda}\in\mathbb{N}_0^m, n=|\boldsymbol{\nu}|$ and
	
	\begin{equation}
		\begin{aligned}
			& p(\boldsymbol{\nu}, \boldsymbol{\lambda})=\Big\{\left(\mathbf{k}_1 \ldots, \mathbf{k}_n ; \boldsymbol{\ell}_1, \ldots, \boldsymbol{\ell}_n\right), \mathbf{k}_i\in\mathbb{N}_0^m,\mathbf{\ell}_i\in\mathbb{N}_0^d: \text { for some } 1 \leq s \leq n, \\
			& \quad \mathbf{k}_i=\mathbf{0} \text { and } \boldsymbol{\ell}_i=\mathbf{0} \text { for } 1 \leq i \leq n-s ;\left|\mathbf{k}_i\right|>0 \text { for } n-s+1 \leq i \leq n ; \\
			& \quad \text { and } \mathbf{0} \prec \boldsymbol{\ell}_{n-s+1} \prec \cdots \prec \boldsymbol{\ell}_n \text { are such that }  \sum_{i=1}^n \mathbf{k}_i=\boldsymbol{\lambda}, \sum_{i=1}^n\left|\mathbf{k}_i\right| \boldsymbol{\ell}_i=\boldsymbol{\nu}\Big\} .
		\end{aligned}
	\end{equation}

	With the above formula, we can prove the following lemma.
	\begin{lemma}\label{lem:g1}Under Assumptions \ref{asp:g1} and \ref{asp:g2}, we have $\norm{\Big(G(\frac{R_i v}{\norm{v}})\Big)^{(\boldsymbol{\nu})}}\leq C\Big(1+G(\frac{R_i v}{\norm{v}})\Big)$, for any $v\in B_{R_i-1}^c(0)$, for any vector $\boldsymbol{\nu}=\left(\nu_1, \ldots, \nu_d\right) \in \mathbb{N}_0^d$. When $|\boldsymbol{\nu}|=1$, we have $\norm{\Big(G(\frac{R_i v}{\norm{v}})\Big)^{(\boldsymbol{\nu})}}\leq CG^{1/2}(\frac{R_i v}{\norm{v}})\Big(1+G^{1/2}(\frac{R_i v}{\norm{v}})\Big)$.
	\end{lemma}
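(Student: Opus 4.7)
The plan is to write $h(v):=G\bigl(\tfrac{R_i v}{\norm{v}}\bigr)$ as a composition $h = G\circ \boldsymbol{g}$ with $\boldsymbol{g}(v):=R_i v/\norm{v}$, and to estimate the derivatives of $h$ using the multivariable Faà di Bruno formula recalled just above the lemma. Two ingredients have to be controlled: the outer derivatives $G^{(\boldsymbol{\lambda})}$ evaluated at $\boldsymbol{g}(v)$ (which may be large, because $G$ itself may be large), and the inner derivatives $\boldsymbol{g}^{(\boldsymbol{\ell})}(v)$ (which must be bounded by a constant independent of $i$ for the final bound to be useful).

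For the inner derivatives, I would use that $\boldsymbol{g}$ is positively homogeneous of degree $0$ in $v$, so every $\boldsymbol{g}^{(\boldsymbol{\ell})}$ with $|\boldsymbol{\ell}|\geq 1$ is positively homogeneous of degree $-|\boldsymbol{\ell}|$, whence
\[
\norm{\boldsymbol{g}^{(\boldsymbol{\ell})}(v)} \leq C_{|\boldsymbol{\ell}|}\, \frac{R_i}{\norm{v}^{|\boldsymbol{\ell}|}}.
\]
On $v\in B_{R_i-1}^c(0)$, since $R_i\to\infty$ I may assume $R_i\geq 2$ (absorbing finitely many remaining indices into the constant), and then $\norm{v}\geq R_i-1\geq R_i/2$ gives $R_i/\norm{v}^{|\boldsymbol{\ell}|}\leq 2^{|\boldsymbol{\ell}|}/(R_i/2)^{|\boldsymbol{\ell}|-1}\leq C_{|\boldsymbol{\ell}|}$ uniformly in $i$.

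For the outer derivatives, Assumption \ref{asp:g1} yields $\norm{G^{(\boldsymbol{\lambda})}(\boldsymbol{g}(v))}\leq C(1+G(\boldsymbol{g}(v)))$ for $|\boldsymbol{\lambda}|\geq 2$, and for $|\boldsymbol{\lambda}|=1$ the estimate $\norm{G^{(\boldsymbol{\lambda})}(\boldsymbol{g}(v))}\leq C G^{1/2}(\boldsymbol{g}(v))(1+G^{1/2}(\boldsymbol{g}(v)))\leq C(1+G(\boldsymbol{g}(v)))$ (using $G^{1/2}\leq \tfrac{1}{2}(1+G)$). Plugging these into the Faà di Bruno formula and summing over the finitely many partitions in $p(\boldsymbol{\nu},\boldsymbol{\lambda})$ with $1\leq|\boldsymbol{\lambda}|\leq|\boldsymbol{\nu}|$ produces the general bound $\norm{h^{(\boldsymbol{\nu})}(v)}\leq C(1+G(\boldsymbol{g}(v)))$.

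For the sharpened $|\boldsymbol{\nu}|=1$ case, I would observe that the Faà di Bruno sum collapses to the ordinary chain rule: the constraint $\sum_i |\mathbf{k}_i|\boldsymbol{\ell}_i = \boldsymbol{\nu}$ with $|\boldsymbol{\nu}|=1$ forces $|\boldsymbol{\lambda}|=1$ and $|\boldsymbol{\ell}_j|=1$ for the unique active index. One then applies directly the sharper bound $\norm{G^{(1)}(\boldsymbol{g}(v))}\leq C G^{1/2}(\boldsymbol{g}(v))(1+G^{1/2}(\boldsymbol{g}(v)))$ from Assumption \ref{asp:g1}, \emph{without} the AM--GM step, which together with the uniform control on $\norm{\boldsymbol{g}^{(1)}}$ gives the stated sharper estimate. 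The only mildly delicate point of the whole argument is the homogeneity bookkeeping that absorbs the factor $R_i$ uniformly in $i$; everything else is a direct invocation of Assumption \ref{asp:g1} inside the Faà di Bruno formula.
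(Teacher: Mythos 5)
Your proposal is correct and follows essentially the same route as the paper: invoke the multivariable Faà di Bruno formula, bound the inner derivatives of $\boldsymbol{g}(v)=R_i v/\norm{v}$ by a constant on $B_{R_i-1}^c(0)$ using the relation $|\boldsymbol{\lambda}|\leq|\boldsymbol{\nu}|$, bound the outer derivatives $G^{(\boldsymbol{\lambda})}$ by $C(1+G)$ via Assumption \ref{asp:g1}, and treat $|\boldsymbol{\nu}|=1$ separately by collapsing to the plain chain rule so as to retain the sharper $G^{1/2}(1+G^{1/2})$ bound. Your homogeneity argument for the inner derivatives is a clean way to obtain the estimate the paper simply calls ``easy to verify,'' and your explicit remark that $G^{1/2}\leq\tfrac{1}{2}(1+G)$ makes precise how the $|\boldsymbol{\lambda}|=1$ case is folded into the general bound $C(1+G)$, but neither changes the substance of the argument.
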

	\begin{proof}
		When $|\boldsymbol{\nu}|=0$, the result is immediate; 	we will focus on the rest of the cases. Let us consider $v\in B^c_{R_i-1}(0)$.
		
		First, for each $\boldsymbol{\ell}_j\in\mathbb{N}_0^d$,  it is easy to verify that for each $k\in [d]$, we have 
		\begin{equation}\label{eq:g74}
			\norm{\Big(\frac{R_i v_k}{\norm{v}}\Big)^{(\boldsymbol{\ell}_j)}}\leq C\frac{R_i}{\norm{v}^{|\boldsymbol{\ell}_j|}},
		\end{equation}
		and thus let $\boldsymbol{w}(v):=\frac{R_i v}{\norm{v}}$ with $m=d$, we have 
		\begin{equation}
			\norm{ \prod_{j=1}^n \frac{\left[\boldsymbol{w}^{(\boldsymbol{\ell}_j)}\right]^{\mathbf{k}_j}}{\left(\mathbf{k}_{j}!\right)\left[\boldsymbol{\ell}_{j}!\right]^{\left|\mathbf{k}_j\right|}}}\leq  \prod_{j=1}^n\frac{CR^{|\boldsymbol{k}_j|}_i}{\norm{v}^{|\boldsymbol{\ell}_j||\boldsymbol{k}_j|}}\leq \frac{CR_i^{|\boldsymbol{\lambda}|}}{\norm{v}^{|\boldsymbol{\nu}|}}\leq C,
		\end{equation}
		since $|\boldsymbol{\lambda}| \leq n=|\boldsymbol{\nu}|$ and $v\in B_{R_i-1}^c(0)$.
		
		Second, by the Assumption \ref{asp:g1}, we have 
		\begin{equation}
			\norm{G^{(\boldsymbol{\lambda})}(\frac{R_i v}{\norm{v}})}\leq C\Big(1+G(\frac{R_i v}{\norm{v}})\Big).
		\end{equation}
		
		So using the Faa Di Bruno formula and the above two inequalities, we have
		\begin{equation}
			\norm{\Big(G(\frac{R_i v}{\norm{v}})\Big)^{(\boldsymbol{\nu})}}\leq C\Big(1+G(\frac{R_i v}{\norm{v}})\Big).
		\end{equation}
		
		When $|\boldsymbol{\nu}|=1$, without loss of generality, we will assume $\Big(G(\frac{R_i v}{\norm{v}})\Big)^{(\boldsymbol{\nu})}=\frac{\partial}{\partial v_1}G(\frac{R_i v}{\norm{v}})$, then we have
		\begin{equation}
			\frac{\partial}{\partial v_1}G(\frac{R_i v}{\norm{v}})=\sum_{k=1}^d\frac{\partial}{\partial v'_k}G(v')\mid_{v'=\frac{R_iv}{\norm{v}}}\frac{\partial}{\partial v_1}\frac{R_iv_k}{\norm{v}},
		\end{equation}
		then using \eqref{eq:g74} and Assumption \ref{asp:g1}, we get $\norm{\Big(G(\frac{R_i v}{\norm{v}})\Big)^{(\boldsymbol{\nu})}}\leq CG^{1/2}(\frac{R_i v}{\norm{v}})\Big(1+G^{1/2}(\frac{R_i v}{\norm{v}})\Big)$.

	\end{proof}
	
	\begin{lemma}\label{lem:g2}Under Assumptions \ref{asp:g1} and \ref{asp:g2}, we have $\norm{\Big(\sqrt{G(\frac{R_i v}{\norm{v}})+1}\Big)^{(\boldsymbol{\nu})}}\leq C\sqrt{(1+G(\frac{R_i v}{\norm{v}}))}$, for any $v\in B_{R_i-1}^c(0)$ and any vector $\boldsymbol{\nu}=\left(\nu_1, \ldots, \nu_d\right) \in \mathbb{N}_0^d$.
	\end{lemma}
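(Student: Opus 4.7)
The plan is to mimic the structure of the proof of Lemma \ref{lem:g1} by viewing the function $h(v):=\sqrt{G(R_i v/\|v\|)+1}$ as the composition $h=F\circ\boldsymbol{g}$ with $\boldsymbol{g}(v)=R_i v/\|v\|$ and $F(u)=\sqrt{G(u)+1}$, and then applying the multivariable Faà di Bruno formula twice in succession.

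First, I would expand $h^{(\boldsymbol{\nu})}$ via the outer Faà di Bruno formula (with $m=d$):
\begin{equation*}
h^{(\boldsymbol{\nu})}=\sum_{1\le|\boldsymbol{\lambda}|\le n}F^{(\boldsymbol{\lambda})}(\boldsymbol{g}(v))\sum_{p(\boldsymbol{\nu},\boldsymbol{\lambda})}(\boldsymbol{\nu}!)\prod_{j=1}^{n}\frac{[\boldsymbol{g}^{(\boldsymbol{\ell}_j)}]^{\mathbf{k}_j}}{(\mathbf{k}_j!)[\boldsymbol{\ell}_j!]^{|\mathbf{k}_j|}}.
\end{equation*}
Exactly as already established in the proof of Lemma \ref{lem:g1}, estimate \eqref{eq:g74} implies that on $B_{R_i-1}^c(0)$ the inner product over $j$ is uniformly bounded by a constant $C$ independent of $v$. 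Hence the question reduces to the pointwise bound $\|F^{(\boldsymbol{\lambda})}(u)\|\le C\sqrt{1+G(u)}$.

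To establish the latter, I would apply the Faà di Bruno formula once more, this time to $F=\phi\circ G$ where $\phi(x)=\sqrt{x+1}$ satisfies $\phi^{(p)}(x)=c_p(x+1)^{1/2-p}$. Every resulting summand has the form $\phi^{(p)}(G)\prod_{j=1}^{p}G^{(\boldsymbol{\mu}_j)}$ with $\sum_j|\boldsymbol{\mu}_j|=|\boldsymbol{\lambda}|$ and $p\ge 1$. By Assumption \ref{asp:g1} each derivative $\|G^{(\boldsymbol{\mu}_j)}\|$ is controlled by $C(1+G)$, where the first-order case $|\boldsymbol{\mu}_j|=1$ uses the sharper estimate $G^{1/2}(1+G^{1/2})\le C(1+G)$. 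Consequently
\begin{equation*}
|\phi^{(p)}(G)|\prod_{j=1}^{p}\|G^{(\boldsymbol{\mu}_j)}\|\le C(1+G)^{1/2-p}(1+G)^{p}=C\sqrt{1+G},
\end{equation*}
and since there are only finitely many summands the same bound survives after summation.

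Combining the two steps yields $\|h^{(\boldsymbol{\nu})}\|\le C\sqrt{1+G(R_i v/\|v\|)}$ on $B_{R_i-1}^c(0)$, which is the claim. The only real obstacle I foresee is the exponent bookkeeping: the negative power $(1+G)^{1/2-p}$ produced by differentiating the square root must cancel exactly against the positive powers $(1+G)^p$ coming from the $p$ derivatives of $G$, and this cancellation requires that the uniform bound $\|G^{(\boldsymbol{\mu}_j)}\|\le C(1+G)$ be available for every order $|\boldsymbol{\mu}_j|\ge 1$, including the first-order one, which is precisely what the sharper first-derivative estimate of Assumption \ref{asp:g1} provides.
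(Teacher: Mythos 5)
Your argument is correct, and it reaches the same power-cancellation identity $(1+G)^{1/2-p}(1+G)^p=(1+G)^{1/2}$ that drives the paper's proof, but it arrives there via a different decomposition of the composite. The paper sets $g(v):=G(R_i v/\|v\|)+1$ (a scalar) and applies Faà di Bruno exactly once with $f(\cdot)=\sqrt{\cdot}$ and $m=1$; all the derivatives $g^{(\boldsymbol{\ell}_j)}$ are then controlled directly by Lemma~\ref{lem:g1}, which gives $\|g^{(\boldsymbol{\ell}_j)}\|\le C(1+G(R_i v/\|v\|))$, and the product is bounded by $C(1+G)^{\lambda}$, cancelling against $f^{(\lambda)}(G+1)\sim(G+1)^{1/2-\lambda}$. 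You instead split the composite as $h = F\circ\boldsymbol{g}$ with $\boldsymbol{g}(v)=R_i v/\|v\|$ (the projection, $m=d$) and $F(u)=\sqrt{G(u)+1}$, use \eqref{eq:g74} to crush the projection's derivative product to a constant, and then run Faà di Bruno a second time on $F=\phi\circ G$ with $\phi(x)=\sqrt{x+1}$. The two routes unpack the same nested composition in a different order; the paper's is marginally more economical because it outsources the inner composition entirely to Lemma~\ref{lem:g1} and needs only the scalar ($m=1$) Faà di Bruno, whereas yours re-derives the projection bound and applies Faà di Bruno twice, but you buy a slightly more self-contained argument that does not invoke Lemma~\ref{lem:g1} as a black box. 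One small bookkeeping point worth being explicit about: in your second Faà di Bruno application the constraint is $\sum_j |\boldsymbol{\mu}_j|=|\boldsymbol{\lambda}|$ with each $|\boldsymbol{\mu}_j|\ge 1$, and it is precisely the uniform bound $\|G^{(\boldsymbol{\mu}_j)}\|\le C(1+G)$ for \emph{all} orders (including $|\boldsymbol{\mu}_j|=1$, via $G^{1/2}(1+G^{1/2})\le C(1+G)$) that makes the exponent cancellation exact, which you correctly flag at the end.
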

	\begin{proof}
		When $\boldsymbol{\nu}=0$, the result is immediate; we will focus on the rest of the cases. Let us consider $v\in B^c_{R_i-1}(0)$.
		
		In this case, the Faa Di Bruno formula reads:
		\begin{equation}
			\Big(\sqrt{G(\frac{R_i v}{\norm{v}})+1}\Big)^{(\boldsymbol{\nu})}=\sum_{1 \leq\lambda \leq n} z^{({\lambda})} \ \sum_{p(\boldsymbol{\nu}, {\lambda})}(\boldsymbol{\nu}!) \prod_{j=1}^n \frac{\left[{w}^{(\boldsymbol{\ell}_j)}\right]^{{k}_j}}{\left({k}_{j}!\right)\left[\boldsymbol{\ell}_{j}!\right]^{{k}_j}},
		\end{equation}
		here $z(\cdot)=\sqrt{\cdot}, w(v)=G(\frac{R_iv}{\norm{v}})+1$ and $m=1$.
		
		First, use Lemma \ref{lem:g1}, and we have 
		\begin{equation}
			\norm{\prod_{j=1}^n \frac{\left[{w}^{(\boldsymbol{\ell}_j)}\right]^{{k}_j}}{\left({k}_{j}!\right)\left[\boldsymbol{\ell}_{j}!\right]^{{k}_j}}}\leq C\Big(1+G(\frac{R_i v}{\norm{v}})\Big)^{\sum_{j=1}^nk_j}=C\Big(1+G(\frac{R_i v}{\norm{v}})\Big)^{\lambda}.
		\end{equation}
		
		Second, we have
		\begin{equation}
			\norm{z^{(\lambda)}(x)\mid_{x=G(\frac{R_i v}{\norm{v}})+1}}= C\Big(G(\frac{R_i v}{\norm{v}})+1\Big)^{\frac{1}{2}-\lambda}.
		\end{equation}

		So, finally, we obtain $\norm{\Big(\sqrt{G(\frac{R_i v}{\norm{v}})+1}\Big)^{(\boldsymbol{\nu})}}\leq C\sqrt{(1+G(\frac{R_i v}{\norm{v}}))}$.
	\end{proof}
	
	Next, let 
	\begin{equation}
		\varphi(x)= \begin{cases}e^{-1 /\left(1-|x|^2\right)} / I & \text { if }|x|<1, \\ 0 & \text { if }|x| \geq 1,\end{cases}
	\end{equation}
	here $x\in\mathbb{R}$ and $I$ is the normalization constant, and $\varphi_\epsilon(x):=C(\epsilon)\varphi(\frac{x}{\epsilon})$, here $C(\epsilon)$ is the normalization constant.   We define

	\begin{equation}
		S(x):=\Big(\chi_{[\frac{1}{2},\infty)}*\varphi_{\frac{1}{8}}\Big)(x),
	\end{equation}
	where $\chi$ is the characteristic function. Then it is easy to verify that
	\begin{eqnarray}
		&S(x)\in [0,1], \forall x\in \mathbb{R};\\
		&S(x)=0, S^{(k)}(x)=0, \forall x\leq 0, k\geq 1;\\
		&S(x)=1,S^{(k)}(x)=0,\forall x\geq 1, k\geq 1;\\
		&|S^{(k)}(x)|\leq C, \forall x\in \mathbb{R}.
	\end{eqnarray}
	
	Define $S_i(v):=S(\norm{v}-R_i+1)$, then it is easy to verify that 
	\begin{eqnarray}
		&S_i(v)\in [0,1], \forall v\in \mathbb{R}^d;\\
		&S_i(v)=0, S_i^{(k)}(v)=0, \forall v\in B_{R_i-1}(0), k\geq 1;\\
		&S_i(v)=1,S_i^{(k)}(v)=0,\forall v\in B^c_{R_i}(0), k\geq 1;\\
		&\norm{S_i^{(k)}(v)}\leq C, \forall v\in B_{R_i}(0)\setminus B_{R_i-1}(0),k\geq 1.
	\end{eqnarray}
	
	Define
	\begin{eqnarray}
		&\overline{G}_i(v):=G(v)(1-S_i(v))+\Big(1+G(\frac{R_iv}{\norm{v}})\Big)S_i(v),\\
		&\overline{\J}_i(v):=\J(v)(1-S_i(v))+\sqrt{G(\frac{R_iv}{\norm{v}})+1}\e S_i(v),
	\end{eqnarray}
	here $e:=(1,\ldots,1)\in \mathbb{R}^d$.
	Then we have the following lemma.
	\begin{lemma}\label{lem:g25}Under Assumptions \ref{asp:g1} and \ref{asp:g2}, we have for any $v\in\mathbb{R}^d$ that
		\begin{eqnarray}
			&\norm{\overline{G}_i^{(1)}}\leq C\overline{G}^{\frac{1}{2}}_i\Big(1+\overline{G}^{\frac{1}{2}}_i\Big);\\
			&\norm{\overline{G}_i^{(k)}}\leq C\Big(1+{\overline{G}_i}\Big),\quad \forall k\geq 2;\\
			&\norm{\overline{\J}_i}^2\leq C\overline{G}_i;\\
			&\norm{\overline{\J}_i^{(k)}}\leq C(1+\overline{G}^{\frac{1}{2}}_i)\quad \forall k\geq 1,
		\end{eqnarray}
		where the constants are uniform for any $t\in [0,T]$ and may depend on $k$.
	\end{lemma}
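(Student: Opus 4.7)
The plan is to verify each of the four inequalities by splitting the domain of $v$ into the three natural regions determined by the cutoff $S_i$: the inner region $B_{R_i-1}(0)$ where $S_i\equiv 0$, the outer region $B_{R_i}^c(0)$ where $S_i\equiv 1$, and the transition annulus $B_{R_i}(0)\setminus B_{R_i-1}(0)$ where both ingredients of $\overline{G}_i$ and $\overline{\J}_i$ are active. In the inner region the bounds collapse to the assumed inequalities on $G$ and $\J$ from Assumption \ref{asp:g1}. In the outer region, they follow immediately from Lemma \ref{lem:g1} for $\overline{G}_i=1+G(R_iv/\|v\|)$ and from Lemma \ref{lem:g2} for $\overline{\J}_i=\sqrt{G(R_iv/\|v\|)+1}\,\e$.

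The only nontrivial region is the annulus. There I would apply the Leibniz rule to the products $G(v)(1-S_i(v))$, $(1+G(R_iv/\|v\|))S_i(v)$, etc., obtaining sums of terms of the form $G^{(\boldsymbol{\mu})}(v)\,S_i^{(\boldsymbol{\nu}-\boldsymbol{\mu})}(v)$ and $(G(R_iv/\|v\|))^{(\boldsymbol{\mu})}\,S_i^{(\boldsymbol{\nu}-\boldsymbol{\mu})}(v)$, and similarly for $\J$. Since the derivatives of $S_i$ are uniformly bounded by construction, the hypothesized bounds of Assumption \ref{asp:g1} apply to the $G$- and $\J$-factors evaluated at $v$, while Lemmas \ref{lem:g1}--\ref{lem:g2} handle the factors evaluated at $R_iv/\|v\|$. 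The result is pointwise control by $C(1+G(v))+C(1+G(R_iv/\|v\|))$ for the $k\ge 2$ estimates on $\overline{G}_i$ and by $C(1+G^{1/2}(v))+C(1+G^{1/2}(R_iv/\|v\|))$ for $\overline{\J}_i$.

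The key step, which I expect to be the main technical obstacle, is identifying these bounds as multiples of the right-hand side expressions involving $\overline{G}_i$ itself. This is precisely where the two-sided comparison in Assumption \ref{asp:g2} enters: on the annulus we have $G(v)\le C_1(1+G(R_iv/\|v\|))$ and $1+G(R_iv/\|v\|)\le C_2 G(v)$, which, together with $S_i\in[0,1]$, yield
\begin{equation*}
G(v)+1+G(R_iv/\|v\|)\;\le\;C\bigl(G(v)(1-S_i(v))+(1+G(R_iv/\|v\|))S_i(v)\bigr)=C\overline{G}_i(v),
\end{equation*}
unless $S_i(v)\in\{0,1\}$, in which case one of the two competing terms drops out and the inequality is trivial. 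This equivalence reduces the annular estimates to the bounds we already derived. The delicate case of $\overline{G}_i^{(1)}$ requires the sharper $G^{1/2}(1+G^{1/2})$-type bound from Assumption \ref{asp:g1} and the corresponding statement of Lemma \ref{lem:g1} for $|\boldsymbol{\nu}|=1$, combined with the fact that the derivatives of $S_i$ vanish at order one only on the boundary regions; one exploits the product structure to ensure that the worst factor $G$ (or $G(R_iv/\|v\|)$) appears with at most a square-root power.

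Finally, for the claim $\|\overline{\J}_i\|^2\le C\overline{G}_i$, I would bound $\|\overline{\J}_i\|^2\le 2\|\J(v)\|^2(1-S_i(v))^2+2(G(R_iv/\|v\|)+1)\,d\,S_i^2(v)$, use $\|\J\|^2\le CG$ from Assumption \ref{asp:g1}, $S_i,(1-S_i)\in[0,1]$, and the Assumption \ref{asp:g2} equivalence above to obtain the desired control by $\overline{G}_i$. Stitching together the three-region analysis produces uniform constants depending on $k$ and $T$ but not on $i$, completing the proof.
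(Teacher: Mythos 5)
Your proposal mirrors the paper's proof: the same split into $B_{R_i-1}(0)$, the transition annulus, and $B^c_{R_i}(0)$, Leibniz's rule on the annulus with the uniform bounds on derivatives of $S_i$, and the two-sided comparison from Assumption~\ref{asp:g2} (giving $G\le C\overline{G}_i$ and $1+G(R_iv/\|v\|)\le C\overline{G}_i$) to convert bounds in $G(v)$ and $G(R_iv/\|v\|)$ into bounds in $\overline{G}_i$; for $\|\overline{\J}_i\|^2$ the paper uses convexity of $\|\cdot\|^2$ with the convex weights $1-S_i,S_i$ rather than your factor-$2$ triangle inequality, but that is cosmetic. One small inaccuracy in your description of the $\overline{G}_i^{(1)}$ case: the derivative hitting $S_i$ \emph{does} produce full-power terms $GS_i^{(1)}$ and $G_i'S_i^{(1)}$, not square-root powers; these are controlled nonetheless because $\overline{G}_i\le\overline{G}_i^{1/2}(1+\overline{G}_i^{1/2})$ together with the Assumption~\ref{asp:g2} comparison.
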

	\begin{proof}
		When $v\in B_{R_i-1}(0)$ or $v\in B^c_{R_i}(0)$, the result is a direct consequence of Assumption \ref{asp:g1}, Lemma \ref{lem:g1}, and Lemma \ref{lem:g2}.  So in the next, we will only focus on the case when $v\in B_{R_i}(0)\setminus B_{R_i-1}(0)$.
		In the following , for simplicity, we will denote $G_i'(v):=1+G(\frac{R_iv}{\norm{v}})$.
		
		First, we have
		\begin{equation}
			\begin{aligned}
				\norm{\overline{G}_i^{(1)}}&=\norm{G^{(1)}(1-S_i)-GS_i^{(1)}+G_i'^{(1)}S_i+G_i'S_i^{(1)}}\\
				&\leq C\Big[\norm{G^{(1)}}+G+\norm{G_i'^{(1)}}+G_i'\Big]\\
				&\leq C\Big[G^{\frac{1}{2}}(1+G^{\frac{1}{2}})+G_i'^{\frac{1}{2}}(1+G_i'^{\frac{1}{2}})\Big]\\
				&\leq C\overline{G}^{\frac{1}{2}}_i\Big(1+\overline{G}^{\frac{1}{2}}_i\Big),
			\end{aligned}
		\end{equation}
		the last inequality is due to Assumption \ref{asp:g2}: in fact, because of Assumption \ref{asp:g2}, we have 
		\begin{eqnarray}
			G&\leq C\overline{G}_i,\\
			G_i'&\leq C\overline{G}_i.
		\end{eqnarray}
		
		Second, by the Leibniz rule, we have
		\begin{equation}
			\begin{aligned}
				\norm{\overline{G}_i^{(k)}}&\leq C\Big[\sum_{s=0}^k\norm{{G}^{(s)}(1-S_i)^{(k-s)}}+\sum_{s=0}^k\norm{G_i'^{(s)}S_i^{(k-s)}}\Big]\\
				&\leq C\Big[\sum_{s=0}^k\norm{{G}^{(s)}}+\sum_{s=0}^k\norm{G_i'^{(s)}}\Big]\\
				&\leq C\Big[1+G+1+G_i'\Big]\\
				&\leq C(1+\overline{G}_i).
			\end{aligned}
		\end{equation}
		
		Third, by the convexity of $\norm{\cdot}^2$, we have
		\begin{equation}
			\begin{aligned}
				\norm{\overline{\J}_i}^2&\leq \norm{\J}^2(1-S_i)+dG_i'S_i\\
				&\leq CG(1-S_i)+dG_i'S_i\\
				&\leq C\Big(G(1-S_i)+G_i'S_i\Big)\\
				&\leq C\overline{G}_i,
			\end{aligned}
		\end{equation}
		where we used Assumption \ref{asp:g1}.
		
		Lastly,  by the Leibniz rule, Assumption \ref{asp:g1}, and Lemma \ref{lem:g2}, we have 
		\begin{equation}
			\begin{aligned}
				\norm{\overline{\J}_i^{(k)}}&\leq C\Big[\sum_{s=0}^k\norm{{\J}^{(s)}(1-S_i)^{(k-s)}}+\sum_{s=0}^k\norm{(\sqrt{G_i'})^{(s)}eS_i^{(k-s)}}\Big]\\
				&\leq C\Big[\sum_{s=0}^k\norm{{\J}^{(s)}}+\sum_{s=0}^k\norm{(\sqrt{G_i'})^{(s)}}\Big]\\
				&\leq C\Big[1+G^{\frac{1}{2}}+\sqrt{G_i'}\Big]\\
				&\leq C\Big(1+\overline{G}_i^{\frac{1}{2}}\Big).
			\end{aligned}
		\end{equation}
		
	\end{proof}	
	Next, we define 
	\begin{equation}
		\begin{aligned}
			&\H(x)=(\chi_{[-10,10]}*\varphi)(x),\\
			&\H_i(x)=\H(\frac{x}{n_i}),
		\end{aligned}
	\end{equation}
	then it is direct to verify that 
	\begin{itemize}
		\item ${\H_i(x)}\in [0,1],\quad\forall x\in \mathbb{R}$;
		\item  For any $k\geq 1$, $\H_i^{(k)}$ has compact support and $\norm{\H^{(k)}_i}<\frac{C}{n_i^k}$, here $C$ does not depend on $i$.
	\end{itemize}

	With $H_i$, we define
	\begin{eqnarray}
		G_i&:=H^2_i\overline{G}_i,\\
		\J_i&:=H_i\overline{\J}_i,
	\end{eqnarray}
	then we have 
	\begin{lemma}\label{lem:g4}Under Assumptions \ref{asp:g1} and \ref{asp:g2}, we have for any $v\in\mathbb{R}^d,t\in [0,T]$ that
		\begin{eqnarray}
			&\norm{{G}_i^{(1)}}\leq C{G}^{\frac{1}{2}}_i\Big(1+{G}^{\frac{1}{2}}_i\Big);\\
			&\norm{{G}_i^{(k)}}\leq C\Big(1+{{G}_i}\Big),\quad \forall k\geq 2;\\
			&\norm{{\J}_i}^2\leq C{G}_i;\\
			&\norm{{\J}_i^{(k)}}\leq C(1+{G}^{\frac{1}{2}}_i)\quad \forall k\geq 1,
		\end{eqnarray}
		here constants are uniform for any $t\in [0,T]$ and may depend on $k$.
	\end{lemma}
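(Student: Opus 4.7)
The plan is to derive each of the four estimates by applying the Leibniz rule to the products $G_i = H_i^2\,\overline{G}_i$ and $\J_i = H_i\,\overline{\J}_i$, and then combining the derivative bounds on $\overline{G}_i$ and $\overline{\J}_i$ already provided by Lemma \ref{lem:g25} with the stated cutoff estimates $|H_i|\leq 1$ and $\|H_i^{(k)}\|\leq C/n_i^k$ for $k\geq 1$. The crucial balancing observation is that, by the construction of $\overline{G}_i$ together with Assumption \ref{asp:g2}, $\overline{G}_i$ is pointwise bounded by $n_i+1$ on all of $\mathbb{R}^d$, hence every derivative of $\overline{G}_i$ is of size $O(n_i)$ and every derivative of $\overline{\J}_i$ of size $O(\sqrt{n_i})$. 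Whenever a factor $H_i^{(j)}$ with $j\geq 1$ (carrying a $1/n_i^j$) meets such a derivative, the product stays uniformly bounded in $i$.

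The estimate $\|\J_i\|^2 \leq C\,G_i$ is immediate from $\|\J_i\|^2 = H_i^2\,\|\overline{\J}_i\|^2 \leq C\,H_i^2\,\overline{G}_i = C\,G_i$. For $\|G_i^{(1)}\|$ I would expand $G_i^{(1)} = 2H_i H_i^{(1)}\overline{G}_i + H_i^2 \overline{G}_i^{(1)}$. Using the identity $H_i\,\overline{G}_i^{1/2}=G_i^{1/2}$ together with $\|\overline{G}_i^{(1)}\|\leq C\,\overline{G}_i^{1/2}(1+\overline{G}_i^{1/2})$, the second term is bounded by $C\,G_i^{1/2}+C\,G_i$. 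For the first, I rewrite it as $2\,H_i^{(1)}\overline{G}_i^{1/2}\cdot G_i^{1/2}$ and estimate the scalar prefactor by $(C/n_i)\sqrt{n_i+1}\leq C'$, uniformly in $i$; this contributes an extra $C'\,G_i^{1/2}$. Adding yields $\|G_i^{(1)}\|\leq C\,G_i^{1/2}(1+G_i^{1/2})$.

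For $\|G_i^{(k)}\|$ with $k\geq 2$ I apply Leibniz again: $G_i^{(k)} = \sum_{j=0}^{k}\binom{k}{j}(H_i^2)^{(j)}\overline{G}_i^{(k-j)}$. The $j=0$ term is $H_i^2\,\overline{G}_i^{(k)}$, bounded by $C\,H_i^2(1+\overline{G}_i)\leq C(1+G_i)$ via Lemma \ref{lem:g25}. For $j\geq 1$, a subsidiary Leibniz expansion gives $|(H_i^2)^{(j)}|\leq C/n_i^j$ (every summand contains at least one $H_i$-derivative), which combined with $\|\overline{G}_i^{(k-j)}\|\leq C(1+\overline{G}_i)\leq C(n_i+2)$ yields $|(H_i^2)^{(j)}\overline{G}_i^{(k-j)}|=O(1/n_i^{j-1})$, uniformly bounded. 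Similarly, for $\|\J_i^{(k)}\|$ I expand $\J_i^{(k)}=\sum_{j=0}^{k}\binom{k}{j}H_i^{(j)}\overline{\J}_i^{(k-j)}$; the $j=0$ term $H_i\,\overline{\J}_i^{(k)}$ is controlled by $C\,H_i(1+\overline{G}_i^{1/2})=C\,H_i+C\,G_i^{1/2}$, while each $j\geq 1$ term is $\leq (C/n_i^j)\,C\sqrt{n_i+1}$, uniformly bounded. This gives $\|\J_i^{(k)}\|\leq C(1+G_i^{1/2})$.

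The proof is essentially careful bookkeeping, and the single point requiring attention is the scaling balance in $n_i$: the $1/n_i^j$ decay of $\|H_i^{(j)}\|$ must absorb the $O(n_i)$ growth of $\overline{G}_i$ and its derivatives. This is precisely why the construction of $\overline{G}_i$ in Lemma \ref{lem:g25}, ensuring $\overline{G}_i\leq n_i+1$ everywhere and that higher derivatives grow at most linearly in $\overline{G}_i$, is indispensable: without it the cutoff $H_i$ alone would not suffice to produce bounds that are uniform in $i$.
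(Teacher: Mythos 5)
Your proof is correct and follows essentially the same route as the paper: apply the Leibniz rule to $G_i = H_i^2\overline{G}_i$ and $\J_i = H_i\overline{\J}_i$, plug in the bounds of Lemma~\ref{lem:g25} for $\overline{G}_i,\overline{\J}_i$ and the cutoff estimates $|H_i|\le 1$, $\|H_i^{(j)}\|\le C/n_i^j$, and then exploit the pointwise bound $\overline{G}_i\le n_i+1$ so that the $1/n_i^j$ factors absorb the size of $\overline{G}_i$ and its derivatives. The only minor point you glossed over is that, in the $\|G_i^{(k)}\|$ expansion, the term with $\overline{G}_i^{(1)}$ (the case $j=k-1$) obeys the \emph{different} first-derivative bound $C\overline{G}_i^{1/2}(1+\overline{G}_i^{1/2})$; this is still $\le C(1+\overline{G}_i)$ by AM--GM, so your claim goes through, but it is worth naming.
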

	\begin{proof}
		Remember $H_i\in [0,1]$  and $\norm{H_i^{(k)}}\leq \frac{C}{n_i^k}$.
		
		First, by Lemma \ref{lem:g25}, we have 
		\begin{equation}
			\begin{aligned}
				\norm{G_i^{(1)}}&\leq 2H_i\overline{G}_i \norm{H_i^{(1)}}+H_i^2\norm{\overline{G}^{(1)}_i}\\
				&\leq CH_i\overline{G}_i^{\frac{1}{2}} \frac{\overline{G}_i^{\frac{1}{2}}}{n_i}+CH_i^2\overline{G}^{\frac{1}{2}}_i\Big(1+\overline{G}^{\frac{1}{2}}_i\Big)\\
				&\leq CG_i^{\frac{1}{2}}\Big(1+G_i^{\frac{1}{2}}\Big),
			\end{aligned}
		\end{equation}
		since $G\leq n_i,\forall v\in B_{R_i}(0)$, so $\overline{G}_i\leq n_i+1,\forall v\in\mathbb{R}^d$, and 
		${\overline{G}_i^{{1}/{2}}}/{n_i}\leq C,\quad\forall v\in \mathbb{R}^d$.
		
		Second, by the Leibniz rule, we have by Lemma \ref{lem:g25} that
		\begin{equation}
			\begin{aligned}
				\norm{G_i^{(k)}}&\leq C\Big[H_i^2\norm{\overline{G}_i^{(k)}}+\sum_{s=1}^k\norm{(H_i^2)^{(s)}\overline{G}_i^{(k-s)}}\Big]\\
				&\leq C\Big[H_i^2(1+\overline{G}_i)+\sum_{s=1}^k\frac{1+\overline{G}_i}{n_i}\Big]\\
				&\leq C(1+G_i).
			\end{aligned}
		\end{equation}
		
		Third, we have
		\begin{eqnarray}
			\norm{\J_i}^2=H_i^2\norm{\overline{\J}_i}^2\leq CH_i^2\overline{G}_i=CG_i.
		\end{eqnarray}
		
		Fourth, we have
		\begin{eqnarray}
			\begin{aligned}
				\norm{\J_i^{(k)}}&\leq C\Big[H_i\norm{\overline{\J}_i^{(k)}}+\sum_{s=1}^k\norm{H_i^{(s)}\overline{\J}_i^{(k-s)}}\Big]\\
				&\leq C\Big[H_i(1+\overline{G}_i^{\frac{1}{2}})+\sum_{s=1}^k\frac{1+\overline{G}_i^{\frac{1}{2}}}{n_i}\Big]\\
				&\leq C(1+G_i^{\frac{1}{2}}).
			\end{aligned}
		\end{eqnarray}
	\end{proof}
	
	The following lemma guarantees that we can use integration by parts on $\mathbb{R}^d$.
	
	\begin{lemma}\label{lm:27}Under Assumption \ref{asp:g1}, let $\phi,\psi\in\Gamma'''$, here 
		\begin{equation}
			\begin{aligned}
				\Gamma'''&:=\Big\{\phi\in C^{2,1}(\mathbb{R}^d\times [0,T]): \phi\in  L^{\infty}(0,T;H^1(\mathbb{R}^d)),\text{ and }\intt\intd G\norm{\phi}^2dvdt<\infty,\\
				&\quad \intt\intd G\norm{\nabla\phi}^2dvdt<\infty, \intt\intd G\sum_{i=1}^d\sum_{j=1}^d|\partial_i\partial_j\phi|^2dvdt<\infty\Big\}.
			\end{aligned}
		\end{equation}
		Then we have
		\begin{align}
			&\intt\intd \partial_t\phi\psi dvdt=\intd \phi(v,T)\psi(v,T)dv-\intd \phi(v,0)\psi(v,0)dv-\intt\intd \partial_t\psi\phi dvdt;\\
			&\intt\intd\div\left(\J\phi\right)\psi dvdt=-\intt\intd \inner{\J}{\nabla\psi}\phi dvdt; \label{2nd}\\
			&\intt\intd \Delta\left(G\phi\right)\psi dvdt=-\intt\intd \inner{\nabla\left(G\phi\right)}{\nabla\psi}dvdt; \label{3rd}\\
			&\intt\intd G\phi\Delta\psi dvdt=-\intt\intd \inner{\nabla\left(G\phi\right)}{\nabla\psi}dvdt.\label{4th}
		\end{align}
		
	\end{lemma}
	\begin{proof}
		The first identity is natural. We only focus on the rest of the equalities. For the second one \eqref{2nd}, we use finite cubes to support approximations. For any $R>0$, let $Q_{R}(0):=[-R,R]^d$, then by the divergence theorem, we have
		\begin{equation*}
			\intt\int_{Q_R(0)} \div\left(\J\phi\right)\psi dvdt=\intt\int_{\partial Q_R(0)}\inner{\J}{n}\phi\psi dSdt-\intt\int_{Q_R(0)} \inner{\J}{\nabla\psi}\phi dvdt,
		\end{equation*}
		here $n$ is the unit outer normal vector of $\partial Q_R(0)$. Next, we will show
		\begin{equation*}
			\lim_{R\to\infty}\Big|\intt\int_{\partial B_R(0)}\inner{\J}{n}\phi\psi dSdt\Big|=0.
		\end{equation*}
		Then the second identity is proved by letting $R\to\infty$; we have
		\begin{equation*}
			\begin{aligned}
				&\Big|\intt\int_{\partial Q_R(0)}\inner{\J}{n}\phi\psi dSdt\Big|\\
				&\leq \intt\int_{\partial Q_R(0)}\norm{\J}^2\phi^2dSdt+\intt\int_{\partial Q_R(0)}\psi^2dSdt\\
				&\leq \intt\int_{\partial Q_R(0)\cup\partial Q_{R+1}(0)}\norm{\J}^2\phi^2dSdt+\intt\int_{\partial Q_R(0)\cup\partial Q_{R+1}(0)}\psi^2dSdt\\
				&\leq C\Big(\intt\int_{Q_{R+1}(0)\setminus Q_R(0)}\norm{\J}^2\phi^2+\norm{\nabla(\J\phi)}^2dvdt+\intt\int_{Q_{R+1}(0)\setminus Q_R(0)}\psi^2+\norm{\nabla\psi}^2dvdt\Big)\\
				&\leq C\Big(\intt\int_{Q_{R+1}(0)\setminus Q_R(0)}\phi^2+G\phi^2+G\norm{\nabla\phi}^2dvdt+\intt\int_{Q_{R+1}(0)\setminus Q_R(0)}\psi^2+\norm{\nabla\psi}^2dvdt\Big)\\
				&\to 0,\quad \text{as } R\to\infty,
			\end{aligned}
		\end{equation*}
		since $\phi,\psi\in\Gamma$. In the above we used the trace theorem and the constant $C$ in the third inequality is from the trace theorem; we want to comment that $C$ depends on $Q_{1/2}(0)$ and does not depend on $R$ here. Indeed, we can decompose $Q_{R+1}(0)\setminus Q_R(0)$ into disjoint cubes with side length $1$, then on each this unit cube, we use the trace theorem then take summation to get the third inequality above; the fourth inequality  above follows by Assumption \ref{asp:g1}. 
		
		The proof of the third equality \eqref{3rd}  is similar; we have for any finite cube that
		\begin{equation*}
			\begin{aligned}
				&\intt\int_{Q_R(0)} \Delta\left(G\phi\right)\psi dvdt=\intt\int_{\partial Q_R(0)}G\inner{\nabla\phi}{n}\psi dSdt\\
				&\quad+\intt\int_{\partial Q_R(0)}\frac{\partial G}{\partial n}\phi\psi dSdt-\intt\int_{Q_R(0)} \inner{\nabla\left(G\phi\right)}{\nabla\psi}dvdt;
			\end{aligned}
		\end{equation*}
		we show now that the second term on the right hand side, that is $\intt\int_{\partial Q_R(0)}{\partial G}/{\partial n}\phi\psi dSdt$, converges to $0$ as $R\to\infty$; using $\norm{\frac{\partial G}{\partial n}}\leq CG^{1/2}(1+G^{1/2})$ and the Cauchy-Schwartz inequality, we have
		\begin{equation}
			\begin{aligned}
				&\intt\int_{\partial Q_R(0)}\frac{\partial G}{\partial n}\phi\psi dSdt\\
				&\leq C\Big(\intt \int_{\partial Q_R(0)}G\phi^2dvdt+\intt \int_{\partial Q_R(0)}\psi^2dvdt+\intt \int_{\partial Q_R(0)}G\psi^2dvdt\Big)\\
				&\leq C\Big(\intt\int_{Q_{R+1}(0)\setminus Q_R(0)}G\phi^2+\norm{\nabla(G^{\frac{1}{2}}\phi)}^2dvdt+\intt\int_{Q_{R+1}(0)\setminus Q_R(0)}G\psi^2+\norm{\nabla(G^{\frac{1}{2}}\psi)}^2dvdt\Big)\\
				&\quad+C\Big(\intt\int_{Q_{R+1}(0)\setminus Q_R(0)} \psi^2+\norm{\nabla\psi}^2dvdt\Big)\\
				&\leq C\Big(\intt\int_{Q_{R+1}(0)\setminus Q_R(0)}\phi^2+\norm{\nabla\phi}^2+\psi^2+\norm{\nabla\psi}^2dvdt\Big)\\
				&\quad+C\Big(\intt\int_{Q_{R+1}(0)\setminus Q_R(0)}G\phi^2+G\norm{\nabla\phi}^2+G\psi^2+G\norm{\nabla\psi}^2dvdt\Big)\\
				&\to 0,\quad \text{as }R\to\infty.
			\end{aligned}
		\end{equation}
		Above we used the assumption $\norm{\nabla G}\leq CG^{1/2}(1+G^{1/2})$, which is equivalent to $\norm{\nabla G^{1/2}}\leq C(1+G^{1/2})$;  the first term, that is $\intt\int_{\partial Q_R(0)}G\inner{\nabla\phi}{n}\psi dSdt$, converges to $0$ as $R\to\infty$:  just denote $\phi':=\inner{\nabla\phi}{n}$, then $\intt\intd G\norm{\nabla\phi'}^2dvdt\leq C\sum_{i,j}\intt\intd G\norm{\partial_i\partial_j\phi}^2dvdt<\infty$ and the rest proof is similar as  above; so we proved the third equality.
		
		For the last equality \eqref{4th}, it is enough to show 
		\begin{equation}
			\intt\int_{\partial Q_R(0)}G\phi\frac{\partial\psi}{\partial n}dvdt\to 0\quad \text{as } R\to\infty,
		\end{equation}
		which is true by using a similar argument as for the proof of the third equality.
	\end{proof}
	\begin{remark}
		Obviously, $\Gamma\subset\Gamma'''$, so the above lemma hold for functions in $\Gamma$.
	\end{remark}
	\subsection{Proof of the main theorem by Galerkin approximation}
	We first find solutions on tori, then use such solutions  to approximate the solution on whole space.
	\subsubsection{Solutions on tori}
	\renewcommand{\intr}{\int_{B_i}}
	Denote $B_i:=[-n_i^2,n_i^2]^d$ and
	\begin{equation}
		H_{i,K}:=\Big\{1,\cos(\frac{2\pi \boldsymbol{k}\cdot v}{{2n_i^2}}),\sin(\frac{2\pi \boldsymbol{k}\cdot v}{{2n_i^2}}):0<|k|\leq K\Big\},
	\end{equation}
	here $\boldsymbol{k}\in\mathbb{N}_0^d$, we can think $B_i$ as a $d$-dimensional torus. Let us emphasize that with this choice of $B_i$, functions $G_i,\J_i$ and $g_i,\varrho_i$~(which will be defined later) equal $0$ near the boundary of $B_i$, hence they are periodic, which allows us to use integration by parts without considering the boundary terms.
	
	Now, we define function
	\begin{equation}
		\rho_i^K(v,t):=\sum_{k=1}^{K}C^K_{i,k}(t)\psi_{i,k}(v),
	\end{equation}
	$\psi_{i,k}$ is from $H_{i,K}$. Now, we want for any test function $\psi\in \operatorname{span}(H_{i,K})$ and any $t\in [0,T]$ that $\rho_i^K$ satisfies the following equations: 
	\begin{equation}\label{eq:gg55}\footnotesize
		\begin{aligned}
			&\intr \partial_t\rho_i^K(v,t)\psi(v)dv=\intr \Big(\div\Big(G_i(v,t)\nabla\rho^K_i(v,t)\Big)+\inner{\J_i(v,t)}{\nabla\rho^K_i(v,t)}+\rho^K_i(v,t)+g_i(v,t)\Big)\psi(v)dv,\\
			&\rho_i^K(v,0)=\varrho^K_i(v),
		\end{aligned}
	\end{equation}
	here $g_i(v,t):=g(v,t)(1-S(\norm{v}-n_i))$ and $\varrho_i^K(v)$ is the projection of $\varrho(v)(1-S(\norm{v}-n_i))$ onto $\operatorname{span}(H_{i,K})$. This requires the coefficient $C^K_{i,k}(t)$ to satisfy the following equation:
	\begin{equation}\label{eq:89}
		\begin{aligned}
			&\sum_{k=1}^{K} A^K_{i,k,j}\frac{d}{dt}C^K_{i,k}(t)= \sum_{k=1}^{K}B^N_{k,i}(t)C_k^K(t)+g^N_i(t)\\
			&C_{i,j}^K(0)=I^K_{i,j},
		\end{aligned}
	\end{equation}
	for $i=1,\ldots, K$, where
	\begin{align}
		&A^K_{i,k,j}=\intr \psi_{i,k}(v)\psi_{i,j}(v)dv=C_i\delta_{ij},\quad C_i>0,\\
		&B^K_{i,k,j}(t)=-\intr \inner{{G_i(v,t)}\nabla\psi_{i,k}(v))}{\nabla\psi_{i,j}(v)}dv+\intr \inner{\J_i(v,t)}{\nabla\psi_{i,k}(v)}\psi_{i,j}(v)dv\\
		&\qquad\qquad\quad+\intr \psi_{i,k}(v)\psi_{i,j}(v)dv,\\
		&g^K_{i,j}(t):=\intr g_i(v,t)\psi_{i,j}(v)dv,\\
		&I_{i,j}^K:=\intr \varrho^K_i(v)\psi_{i,j}(v)dv.
	\end{align}
	
	We know by ODE theory that the system \eqref{eq:89} has a unique solution till some time $T'>0$, so
	$\rho_i^K$ is well-defined in $[0,T')$, and for any $\psi$ such that $\psi(\cdot,t)\in \mathrm{span}(H_{i,K})$, we have for any fixed $t$ that
	\begin{equation}\label{eq:94}
		\begin{aligned}
			\intr \partial_t\rho_i^K(v,t) \psi(v,t)dv&=-\intr \inner{{G_i(v,t)}\nabla\rho_i^K(v,t))}{\nabla\psi(v,t)}dv\\
			&\quad+\intr \inner{\J_i(v,t)}{\rho_i^K(v,t)}\psi(v,t)dv\\
			&\quad +\intr\rho_i^K(v,t)\psi(v,t)dv+\intr g_i(v,t)\psi(v,t)dv.
		\end{aligned}
	\end{equation}
	
	In the next, for simplicity, we will omit $K,i$ in $\rho_i^K$. Choose $\psi=\rho~(=\rho_i^K)\in \operatorname{span}(H_{i,K})$, we then have
	\begin{equation}\label{eq:95}
		\begin{aligned}
			\frac{1}{2}\frac{d}{dt}\intr\rho^2dv&=-\intr\inner{G_i\nabla\rho}{\nabla\rho} dv+\intr\inner{\J_i}{\nabla\rho}\rho dv+\intr \rho^2 dv+\intr g_i\rho dv\\
			&\leq -\intr G_i\norm{\nabla\rho}^2 dv+\epsilon\intr \norm{\J_i}^2\norm{\nabla\rho}^2dv+\Big(\frac{C}{\epsilon}+2\Big)\intr\rho^2dv+\intr g_i^2dv\\
			&\leq -(1-C\epsilon)\intr{G_i}\norm{\nabla\rho}^2dv+C(\epsilon)\intr\rho^2dv+\norm{g(\cdot,t)}^2_{L^2(\mathbb{R}^d)},
		\end{aligned}
	\end{equation}
	where in the last inequality, we used Lemma \ref{lem:g4}, then choose $\epsilon$ small enough and by the Gr\"onwall lemma, for any $t\in [0,T']$, we have
	\begin{align}
		&\intr\rho^2(v,t)dv\leq C(\norm{g}_{L^\infty(0,T;L^2(\mathbb{R}^d))},T')<\infty\\
		&\int_0^{T'}\intr G_i\norm{\nabla\rho}^2dvdt\leq C(T',\norm{g}_{L^\infty(0,T;\mathbb{R}^d)})<\infty,
	\end{align}
	the constant on the right hand sides are independent of $i,K$. From this, we can derive that $T'$ can be extended to $\infty$, i.e., we have global solutions in time.
	
	Next, for $m\geq 1$, we assume
	\begin{equation}\label{eqs:g66}
		\begin{aligned}
			&\sum_{s<m}\intr(\rho^{(s)})^2dv\leq C(T,d,m,\norm{g}_{L^{\infty}(0,T;H^{m-1}(\mathbb{R}^d))})<\infty,\\
			&\sum_{0<s\leq m}\intt\intr G_i\norm{\rho^{(s)}}^2dvdt\leq C(T,d,m,\norm{g}_{L^{\infty}(0,T;H^{m-1}(\mathbb{R}^d))})<\infty.
		\end{aligned}
	\end{equation}
	Then under this assumption, we will prove the above inequalities are also true for $m+1$ as induction step. When $m=1$, we have proved already that the above estimates are true.

	For $m\geq 1$, we choose $\psi=\partial^m\partial^m\rho=:(\partial^m)^2\rho$ as the test function. We know for any $t$, $\psi(\cdot,t)\in \mathrm{span}(H_{i,K})$. Then by periodicity, integration by parts and compensation of signs, we have
	\begin{equation}\label{eq:99}
		\begin{aligned}
			\frac{1}{2}\frac{d}{dt}\intr(\partial^m\rho)^2dv&=-\intr\inner{\nabla\rho^{(m)}}{\partial^m(G_i\nabla\rho)}dv\\
			&\quad+\intr\partial^m\Big(\inner{\J_i}{\nabla\rho}\Big)\rho^{(m)}dv\\
			&\quad+\intr(\rho^{(m)})^2dv\\
			&\quad+\intr g^{(m)}_i\rho^{(m)} dv,
		\end{aligned}
	\end{equation}
	thus
	\begin{equation}
		\begin{aligned}
			\frac{1}{2}\frac{d}{dt}\intr(\partial^m\rho)^2dv&\leq -\intr G_i\norm{\nabla\rho^{(m)}}^2 dv\\
			&\quad-\intr\inner{\J_i}{\nabla\rho^{(m)}}\rho^{(m)}dv\\
			&\quad+\intr(\rho^{(m)})^2dv\\
			&\quad+\intr g_i^{(m)}\rho^{(m)} dv\\
			&\quad +C\norm{\intr\inner{\nabla\rho^{(m)}}{G_i^{(1)}\nabla\rho^{(m-1)}}dv}\\
			&\quad+C\sum_{s=2}^m\norm{\intr \inner{\nabla\rho^{(m)}}{G_i^{(s)}\nabla\rho^{(m-s)}}dv}\\
			&\quad +C\sum_{s=1}^m\norm{\intr\inner{\J_i^{(s)}}{\nabla\rho^{(m-s)}}\rho^{(m)}dv},
		\end{aligned}
	\end{equation}
	note when $m=1$, the sixth term in the right hand side is zero. The summation of the first four terms on the right hand side, just like in the case where $m=0$, can be bounded from above by 
	\begin{equation}
		-(1-C\epsilon)\intr G_i\norm{\nabla\rho^{(m)}}^2dv+C(\epsilon)\intr (\rho^{(m)})^2dv+C\norm{g(\cdot,t)}_{H^m(\mathbb{R}^d)};
	\end{equation}
	Using Lemma \ref{lem:g4}, the fifth term is less than
	\begin{equation}
		\epsilon\intr G_i\norm{\nabla\rho^{(m)}}^2dv+C(\epsilon)\Big(\intr\norm{\nabla \rho^{(m-1)}}^2dv+\intr G_i\norm{\nabla\rho^{(m-1)}}^2dv\Big);
	\end{equation}
	when $m\geq 2$, the sixth term,  using integration by parts and Lemma \ref{lem:g4}, is less than
	\begin{equation}
		\begin{aligned}
			&\sum_{s=2}^m\norm{\intr \inner{\nabla\rho^{(m)}}{G_i^{(s)}\nabla\rho^{(m-s)}}dv}\\
			&\leq \sum_{s=2}^m\norm{\intr \rho^{(m)}\inner{\nabla G_i^{(s)}}{\nabla\rho^{(m-s)}}dv}+\sum_{s=2}^m\norm{\intr G_i^{(s)}\rho^{(m)}\Delta\rho^{(m-s)}dv}\\
			&\leq C\sum_{s=2}^m\norm{\intr \Big(1+G_i\Big)\norm{\rho^{(m)}}\norm{\nabla\rho^{(m-s)}}dv}+\sum_{s=2}^m\norm{\intr \Big(1+G_i\Big)\norm{\rho^{(m)}}\norm{\Delta\rho^{(m-s)}}dv}\\
			&\leq C\Big(\intr(\rho^{(m)})^2dv+\intr G_i(\rho^{(m)})^2dv+\sum_{s=2}^{m}\int\norm{\nabla \rho^{(m-s)}}^2dv+\sum_{s=2}^m\int G_i\norm{\nabla \rho^{(m-s)}}^2dv\\
			&\quad +\sum_{s=2}^{m}\int\norm{\Delta \rho^{(m-s)}}^2dv+\sum_{s=2}^m\int G_i\norm{\Delta \rho^{(m-s)}}^2dv\Big)\\
			&\leq C\sum_{1\leq s\leq m}\Big(\intr (\rho^{(s)})^2dv+\intr G_i(\rho^{(s)})^2dv\Big);
		\end{aligned}
	\end{equation}
	where the last term,  by using Lemma \ref{lem:g4}, is less than
	\begin{equation}
		\begin{aligned}
			&\sum_{s=1}^m\norm{\intr\inner{\J_i^{(s)}}{\nabla\rho^{(m-s)}}\rho^{(m)}dv}\\
			&\leq C\sum_{s=1}^m\norm{\intr \Big(1+G_i^{\frac{1}{2}}\Big)\norm{\rho^{(m)}}\norm{\nabla\rho^{(m-s)}}dv} \\
			&\leq C\sum_{1\leq s\leq m}\Big(\intr (\rho^{(s)})^2dv+\intr G_i(\rho^{(s)})^2dv\Big).
		\end{aligned}
	\end{equation}
	So, by combining all the estimates, integrating both sides from $0$ to $t$, and using condition \eqref{eqs:g66}, we obtain
	\begin{equation}\label{eq:g88}
		\begin{aligned}
			&\sum_{s=m}\intr(\rho^{(s)}(v,t))^2dv-\sum_{s=m}\intr(\rho^{(s)}(v,0))^2dv\\
			&\leq -(1-C\epsilon)\sum_{s=m}\int_0^t\intr G_i(v,r)\norm{\nabla\rho^{(s)}(v,r)}^2dvdr\\
			&\quad+C\sum_{s=m}\int_0^t\intr(\rho^{(s)}(v,r))^2dvdr\\
			&\quad+C(T,d,m,\epsilon,\norm{g}_{L^{\infty}(0,T;H^{m}(\mathbb{R}^d))}).
		\end{aligned}
	\end{equation}
	Then by choosing $\epsilon$ small enough, and using the Gr\"onwall lemma, we have
	\begin{eqnarray}
		&\sum_{s<m+1}\intr(\rho^{(s)})^2dv\leq C(T,d,m+1,\norm{g}_{L^{\infty}(0,T;H^{m}(\mathbb{R}^d))})<\infty,\label{eq:g89}\\
		&\sum_{0<s\leq m+1}\intt\intr G_i\norm{\rho^{(s)}}^2dvdt\leq C(T,d,m+1,\norm{g}_{L^{\infty}(0,T;H^{m}(\mathbb{R}^d))})<\infty.\label{eq:g90}
	\end{eqnarray}

	Next, consider the test function $\psi(v,t)=(\partial^{m})^2\partial_t\rho(v,t)$, then use \eqref{eq:gg55}, integration by parts and observe that $\intr\norm{\partial^m\Big(\div(G_i\nabla\rho)\Big)}^2dv\leq C(n_i)\norm{\rho}^2_{L^{\infty}(0,T;H^{m+2}(B_i))}$, to obtain
	\begin{equation}\label{eq:g85}
		\begin{aligned}
			\intr(\partial_t\partial^m\rho)^2dv&=-\intr\inner{\partial^m\Big(\div(G_i\nabla\rho)\Big)}{\partial_t\rho^{(m)}}dv\\
			&\quad+\intr\partial^m\Big(\inner{\J_i}{\rho}\Big)\partial_t\rho^{(m)}dv\\
			&\quad+\intr\rho^{(m)}\partial_t\rho^{(m)}dv+\intr g_i^{(m)}\partial_t\rho^{(m)}dv\\
			&\leq \epsilon\intr(\partial_t\partial^m\rho)^2dv+\frac{C(n_i)}{\epsilon}\Big(\norm{\rho}^2_{L^{\infty}(0,T;H^{m+2}(B_i))}+\norm{g}^2_{L^{\infty}(0,T;H^m(\mathbb{R}^d))}\Big).
		\end{aligned}
	\end{equation}
	As $G_i\leq n_i+1$,  we have
	\begin{equation}\label{eq:gg106}
		\intr(\partial_t\partial^m\rho)^2dv\leq \frac{C(n_i)}{\epsilon(1-\epsilon)}\Big(\norm{\rho}^2_{L^{\infty}(0,T;H^{m+2}(B_i))}+\norm{g}^2_{L^{\infty}(0,T;H^{m}(\mathbb{R}^d))}\Big)<\infty.
	\end{equation}
	which does not depend on $K$. Thus we have $\rho^K_i\in W^{1,\infty}(0,T;H^{m}(B_i))$, for any $m\geq 0$ integer, and its norm does not depend on $K$, but depends on $i$ in this case.

	Next, we show $\rho_i^K$ has a limit as $K\to\infty$.	We first have
	\begin{equation}
		\intr (\rho^K_i)^2dv=\sum_{k=1}^{K}(C_{i,k}^K(t))^2\intr(\psi_{i,k})^2dv\leq C(T,d,\norm{g}_{L^{\infty}(0,T;L^2(\mathbb{R}^d))})<\infty,
	\end{equation}
	which does not depend on $i,K$, and thus we have
	\begin{equation}
		\norm{C_{i,k}^K(t)}^2\leq\frac{C(T,d,\norm{g}_{L^{\infty}(0,T;L^2(\mathbb{R}^d))})}{\intr(\psi_{i,k})^2dv}<\infty,
	\end{equation}
	so $C_{i,k}^K(t)$ is uniformly bounded on $[0,T]$ when $K\to\infty$. Second, we will show $C_{i,k}^K(t)$ is H\"older continuous and its H\"older norm does not depend on $K$. 
	Choose $\psi=\psi_{i,k}$ in equation \eqref{eq:gg55}, we have
	\begin{equation}
		\begin{aligned}
			\norm{\intr (\psi_{i,k})^2dv\frac{d}{dt}C^K_{i,k}(t)}&\leq \norm{\intr \inner{G_i(v,t)\nabla\rho^K_i(v,t)}{\nabla\psi_{i,k}(v)}dv}\\
			&\quad+\norm{\intr \inner{\J_i(v,t)}{\nabla\rho^K_i(v,t)}\psi_{i,k}(v)dv}\\
			&\quad +\norm{\intr \rho^K_i(v,t)\psi_{i,k}(v)dv}+\norm{\intr g_i(v,t)\psi_{i,k}(v)dv}\\
			&\leq C(n_i,\norm{g}_{L^{\infty}(0,T;L^2(\mathbb{R}^d))},\norm{\rho}_{L^{\infty}(0,T;H^1(B_i))})\norm{\psi_{i,k}}_{H^1(B_i)},
		\end{aligned}
	\end{equation}
	since $G_i\leq n_i+1$; thus
	\begin{equation}
		\norm{\frac{d}{dt}C^K_{i,k}(t)}\leq \frac{C(n_i,\norm{g}_{L^{\infty}(0,T;L^2(\mathbb{R}^d))},\norm{\rho}_{L^{\infty}(0,T;H^1(B_i))})\norm{\psi_{i,k}}_{H^1(B_i)}}{\norm{\psi_{i,k}}^2_{L^2(B_i)}},
	\end{equation}
	which is independent of $K$ but depends on $i$, hence $C^K_{i,k}(t)$ is uniformly Lipschitz on $[0,T]$ as $K\to\infty$. Thus, by the Arzel\`a-Ascoli theorem, $C^K_{i,k}(t)$ will uniformly converge to a Lipschitz continuous function, denote it as $C_{i,k}(t)$. We define
	\begin{equation}
		\rho_i(v,t):=\sum_{k=1}^\infty C_{i,k}(t)\psi_{i,k}(v),
	\end{equation}
	then for this limit function $\rho_i(v,t)$, the estimates of \eqref{eq:g89}, \eqref{eq:g90}, and \eqref{eq:gg106} also hold, since they do not depend on $K$. 
	
	By Sobolev compact embedding theorem, we have $\rho_i^K(\cdot,t),\partial_t\rho_i^K(\cdot,t)$ converge to $\rho_i(\cdot,t),\partial_t\rho_i(\cdot,t)$ strongly in $\mathcal{C}^{m}(B_i), \forall m\geq 0$, respectively, and so by \eqref{eq:gg55},  we have
	\begin{equation}\label{eq:t94}
		\begin{aligned}
			\intr \partial_t\rho_i(v,t) \psi(v,t)dv&=-\intr \inner{{G_i(v,t)}\nabla\rho_i(v,t))}{\nabla\psi(v,t)}dv\\
			&\quad+\intr \inner{\J_i(v,t)}{\rho_i(v,t)}\psi(v,t)dv\\
			&\quad +\intr\rho_i(v,t)\psi(v,t)dv+\intr g_i(v,t)\psi(v,t)dv,
		\end{aligned}
	\end{equation}
	for any $\psi\in H^1(B_i)$, since $\operatorname{span}(H_{i,\infty})$ is dense in $H^1(B_i)$, so 
	\begin{equation}\label{eq:g99}
		\partial_t\rho_i(\cdot,t)=\div(G_i(\cdot,t)\nabla\rho_i(\cdot,t))+\inner{\J_i(\cdot,t)}{\nabla\rho_i(\cdot,t)}+\rho_i(\cdot,t)+g_i(\cdot,t),
	\end{equation}
	weakly.
	
	Since $\rho_i\in W^{1,\infty}(0,T;\mathcal{C}^{m}(B_i))$, then by \cite[Section 5.9, Theorem 2]{evans2022partial}, we have $\rho_i\in \mathcal{C}([0,T];\mathcal{C}^{m}(B_i))$ and 
	\begin{equation}\label{eq:112}
		\rho_i(v,s')=\rho_i(v,s)+\int_{s}^{s'}\partial_t\rho_i(v,t)dt.
	\end{equation}
	So $\rho_i(v,t)$ is continuous in $t$. Moreover, for $l\leq m$, we have
	\begin{equation}
		\rho_i^{(l)}(v,s')=\rho_i^{(l)}(v,s)+\int_s^{s'}\partial_t\rho_i^{(l)}(v,t)dt.
	\end{equation}
	So $\rho_i^{(l)}(v,t)$ is continuous in $t$, for $l\leq m$. Thus, by \eqref{eq:g99}, we have $\partial_t\rho_i(v,t)$ is H\"older-$\alpha$-continuous in $t$, since $\boldsymbol{J},G,g$ are locally H\"older-$\alpha$-continuous in time. Thus $\rho_i\in \mathcal{C}^{m,1+\alpha}(B_i\times [0,T])$, for any $m>0$, is a classical solution to 
	\begin{equation}\label{eq:g102}
		\begin{aligned}	&\partial_t\rho_i=\div(G_i\nabla\rho_i)+\inner{\J_i}{\nabla\rho_i}+\rho_i+g_i,\\
			&\rho_i(v,0)=\varrho(v)(1-S(\norm{v}-n_i)).
		\end{aligned}
	\end{equation}
	Using integration by parts, we have
	\begin{equation}\label{eq:g113}\footnotesize
		\begin{aligned}
			&\intt\intr\partial_t\psi(v,t)\rho_i(v,t)-\inner{G_i(v,t)\nabla\rho_i(v,t)}{\nabla\psi(v,t)}+\inner{\J_i(v,t)}{\nabla\rho_i(v,t)}\psi(v,t)+\rho_i(v,t)\psi(v,t) dvdt\\
			&\quad=\intr\psi(v,T)\rho_i(v,T)dv-\intr\psi(v,0)\varrho(v)(1-S(\norm{v}-n_i)) dv-\intt\intr g_i(v,t)\psi(v,t)dvdt,
		\end{aligned}
	\end{equation}
	for any $\psi\in\mathcal{C}_c^{\infty}(B_i\times [0,T])$.
	
	By taking $m$-th spatial derivative on both sides of \eqref{eq:g102}, we have
	\begin{equation}
		\begin{aligned}
			\partial_t\rho_i^{(m)}&=\sum_{s=0}^mC(s,m)\div(G_i^{(s)}\nabla\rho_i^{(m-s)})+\sum_{s=0}^mC(s,m)\inner{\J_i^{(s)}}{\nabla\rho_i^{(m-s)}}+\rho_i^{(m)}+g_i^{(m)},
		\end{aligned}
	\end{equation}
	thus for any compact region $Z\subset B_i$, we have
	\begin{equation}\label{eq:g105}
		\begin{aligned}
			\int_{Z}\norm{\partial_t\rho_i^{(m)}}^2dv&\leq C(\norm{G}_{L^{\infty}(Z)},m)\Big(\norm{\rho_i}^2_{L^{\infty}(0,T;H^m(Z))}+\norm{g}_{L^{\infty}(0,T;H^m(Z))}^2\Big)\\
			&\leq C(\norm{G}_{L^{\infty}(Z)},m)\Big(\norm{\rho_i}^2_{L^{\infty}(0,T;H^m(B_i))}+\norm{g}_{L^{\infty}(0,T;H^m(\mathbb{R}^d))}^2\Big)\\
			&\leq C(\norm{G}_{L^{\infty}(0,T,L^{\infty}(Z))},m,d,T,\norm{g}_{L^{\infty}(0,T;H^m(\mathbb{R}^d))})<\infty,
		\end{aligned}
	\end{equation}
	which does not depend on $i$. 
	
	
	\begin{remark}\label{rmk:g47}
		Finally, we want to remark that all the analysis in this section also hold for the following equation:
		\begin{equation}\label{eq:ggg106}
			\begin{aligned}
				&\partial_t\rho_i=\div(G_i\nabla\rho_i)-\div({\J_i}{\rho_i})+\rho_i+g_i,\\
				&\rho_i(v,0)=\varrho(v)(1-S(\norm{v}-n_i)),\forall v\in B_i,
			\end{aligned}
		\end{equation}
		since \eqref{eq:99} also holds for \eqref{eq:ggg106}, that is
		\begin{equation}
			\begin{aligned}
				\frac{1}{2}\frac{d}{dt}\intr(\partial^m\rho)^2dv&=-\intr\inner{\nabla\rho^{(m)}}{\partial^m(G_i\nabla\rho)}dv\\
				&\quad+\intr\inner{\partial^m(\J_i\rho)}{\nabla\rho^{(m)}}dv\\
				&\quad+\intr(\rho^{(m)})^2dv\\
				&\quad+\intr g^{(m)}_i\rho^{(m)} dv\\
				&\leq -\intr G_i\norm{\nabla\rho^{(m)}}^2 dv\\
				&\quad-\intr\inner{\J_i}{\nabla\rho^{(m)}}\rho^{(m)}dv\\
				&\quad+\intr(\rho^{(m)})^2dv+\intr g_i^{(m)}\rho^{(m)} dv\\
				&\quad +C\norm{\intr\inner{\nabla\rho^{(m)}}{G_i^{(1)}\nabla\rho^{(m-1)}}dv}\\
				&\quad+C\sum_{s=2}^m\norm{\intr \inner{\nabla\rho^{(m)}}{G_i^{(s)}\nabla\rho^{(m-s)}}dv}\\
				&\quad +C\sum_{s=1}^m\norm{\intr\inner{\J_i^{(s)}}{\nabla\rho^{(m)}}\rho^{(m-s)}dv}.\\
			\end{aligned}
		\end{equation}
		By using integration by parts for the last term on the right hand side, we can further derive that
		\begin{equation}
			\begin{aligned}
				\frac{1}{2}\frac{d}{dt}\intr(\partial^m\rho)^2dv&\leq -\intr G_i\norm{\nabla\rho^{(m)}}^2 dv\\
				&\quad-\intr\inner{\J_i}{\nabla\rho^{(m)}}\rho^{(m)}dv\\
				&\quad+\intr(\rho^{(m)})^2dv+\intr g_i^{(m)}\rho^{(m)} dv\\
				&\quad +C\norm{\intr\inner{\nabla\rho^{(m)}}{G_i^{(1)}\nabla\rho^{(m-1)}}dv}\\
				&\quad+C\sum_{s=2}^m\norm{\intr \inner{\nabla\rho^{(m)}}{G_i^{(s)}\nabla\rho^{(m-s)}}dv}\\
				&\quad +C\sum_{s=1}^m\norm{\intr\inner{\J_i^{(s)}}{\nabla\rho^{(m-s)}}\rho^{(m)}dv}\\
				&\quad+C\sum_{s=1}^m\norm{\intr{\div(\J_i^{(s)})}{\rho^{(m-s)}}\rho^{(m)}dv}.
			\end{aligned}
		\end{equation}
		The last term is less than $C\sum_{0\leq s\leq m}\intr (\rho^{(s)})^2dv+C\sum_{0\leq s\leq m}\intr G_i (\rho^{(s)})^2dv$, so \eqref{eq:g88} holds for this new equation and thus the esitmates \eqref{eq:g89} and \eqref{eq:g90}  also hold, for any $m\geq 0$. Similarly, the estimate for $\partial_t\rho^{(m)}$ and the rest derivation of this section also hold.
	\end{remark}
	\subsubsection{Taking the limit and uniqueness of solution in $\Gamma'$}
	All in all,  in the last section, we have proved $\rho_i\in W^{1,\infty}(0,T; H^m(Z))$ for any $m\geq 0$ and any compact domain $Z\subset\mathbb{R}^d$~(we need to choose $i$ large enough to let $B_i$ encompass $Z$), and its norm in $W^{1,\infty}(0,T; H^m(Z))$ does not depend on $i$; thus, by Sobolev embedding, we have $\rho_i\in \mathcal{C}^{m,1+\alpha}(Z\times [0,T])$ and its norm in $\mathcal{C}^{m,1+\alpha}(Z\times [0,T])$ is independent of $i$, and $\rho_i$ satisfies \eqref{eq:g102} for any $v\in Z$. So by letting $i\to\infty$, we can find a limit function $\rho\in \mathcal{C}^{m',1+\alpha'}(Z\times [0,T])$ for $m'<m,\alpha'<\alpha$, that satisfies 
	\begin{equation}\label{eq:g106}
		\begin{aligned}	&\partial_t\rho=\div(G\nabla\rho)+\inner{\J}{\nabla\rho}+\rho+g,\\
			&\rho(v,0)=\varrho(v),
		\end{aligned}
	\end{equation}
	for any $v\in Z$. Since $Z$ is any compact set in $\mathbb{R}^d$, we have that  $\rho$ satisfies \eqref{eq:g106} on $\mathbb{R}^d$ and it is not hard to verify that $\rho\in L^{\infty}(0,T;H^m(\mathbb{R}^d))$ for any $m>0$,
	\begin{equation}
		\intt\int_{\mathbb{R}^d}G(v,t)\norm{\rho^{(m)}}^2dvdt<\infty,
	\end{equation}
	for any $m> 0$ and $\rho\in \mathcal{C}^{m,1+\alpha}(Z\times [0,T])$, for any $m\geq 0$.

	Then we can conclude that, under Assumptions \ref{asp:g1} and \ref{asp:g2}, equation \eqref{eq:g1} has a unique solution in $\Gamma'$. The proof is simple: let $\rho_1,\rho_2\in \Gamma'$ be two solutions, then $\rho:=\rho_1-\rho_2\in\Gamma'$ is the solution of the following equation
	\begin{equation}
		\begin{aligned}
			&\partial_t\rho=\div\Big(G\nabla\rho\Big)+\inner{J}{\nabla\rho}+\rho,\\
			&\rho(v,0)=0,\forall v\in \mathbb{R}^d.
		\end{aligned}
	\end{equation}
	Then for any $t\in [0,T]$, we have
	\begin{equation}\footnotesize
		\begin{aligned}
			\frac{1}{2}\intd\rho^2(v,t)dv&=\int_0^t\intd \div\Big(G\nabla\rho\Big)\rho dvdr+\int_0^t\intd\inner{J}{\nabla\rho}\rho dvdr+\int_0^t\intd\rho^2dvdr\\
			&=-\int_0^t\intd G\norm{\nabla\rho}^2dvdr+\int_0^t\intd\inner{J}{\nabla\rho}\rho dvdr+\int_0^t\intd\rho^2dvdr\\
			&\leq -\int_0^t\intd G\norm{\nabla\rho}^2dvdr+\epsilon\int_0^t\intd\norm{J}^2\norm{\nabla\rho}^2dvdr+\frac{1}{\epsilon}\int_0^t\intd\rho^2 dvdr+\int_0^t\intd\rho^2dvdr\\
			&\leq -(1-C\epsilon)\int_0^t\intd G\norm{\nabla\rho}^2dvdr+(1+\frac{1}{\epsilon})\int_0^t\intd \rho^2dvdr.
		\end{aligned}
	\end{equation}
	Then by  G\"onwall lemma, we have $\rho=0$. In the above computations, 
	\begin{equation}
		\int_0^t\intd \div\Big(G\nabla\rho\Big)\rho dvdr=-\int_0^t\intd G\norm{\nabla\rho}^2dvdr,
	\end{equation}
	is due to $\rho\in\Gamma'$, so we can always use compactly  supported functions $\rho_c$ to approximate $\rho$ such that
	\begin{equation}
		\begin{aligned}
			&\int_0^t\intd \norm{\div\Big(G\nabla (\rho-\rho_c)\Big)}^2dvdr\to 0,\\
			&\int_0^t\intd\norm{\rho-\rho_c}^2dvdr\to 0,\\
			&\int_0^t\intd G\norm{\nabla(\rho-\rho_c)}^2dvdr\to 0,
		\end{aligned}
	\end{equation}
	and for compactly supported function $\rho_c$, we always have 
	\begin{equation}
		\int_0^t\intd \div\Big(G\nabla\rho_c\Big)\rho_c dvdr=-\int_0^t\intd G\norm{\nabla\rho_c}^2dvdr.
	\end{equation}
	\begin{remark}\label{rmk:g95}
		By Remark \ref{rmk:g47}, the above results also hold for equation \eqref{eqg:2}.
	\end{remark}
	
	\subsubsection{Improved estimates under stronger assumptions}
	In the previous sections, under Assumptions \ref{asp:g1} and \ref{asp:g2}, we showed that equation \eqref{eq:g1}~(\eqref{eqg:2}) has a classical solution satisfying $\intt\intd G\norm{\rho^{(m)}}^2dvdt<\infty,\forall m>0, \rho\in W^{1,\infty}(0,T;H^m(Z)),\forall m\geq 0, Z\subset\mathbb{R}^d$ compact set. In this section, we will show that this solution also satisfies $\rho\in W^{1,\infty}(0,T;H^m(\mathbb{R}^d)),\forall m\geq 0$ and $\intd G^2\norm{\rho^{(m)}}dv<\infty,\forall t\in [0,T], \intt\intd G^3\norm{\rho^{(m)}}^2dvdt<\infty,\forall m\geq 0$; however this result requires the following extra assumption.
	
	\begin{assumption}\label{asp:g3}
		We assume on each torus $B_i$, there exists a smooth non-negative function $Q_i:B_i\to \mathbb{R}$, which does not depend on $t$, such that 
		\begin{eqnarray}
			&\norm{\nabla Q_i}\leq C(1+Q_i^{\frac{1}{2}}),\\
			&C_1(1+G_i)\leq Q_i+1\leq C_2(1+G_i),\\
			&\intd (1+G^2)\norm{f^{(m)}}^2dvdt\leq C<\infty,\quad \forall m\geq 0, t\in [0,T].
		\end{eqnarray}
	\end{assumption}
	\begin{remark}\label{rmke:g211}
		A sufficient condition for the above assumption to be satisfied is that $f$ is a smooth compactly supported function on $\mathbb{R}^d\times (0,T)$, $\norm{\nabla G}\leq C\Big(1+G^{1/2}\Big)$ and $ G(v,t_2)+1\leq C(1+G(v,t_1))$ for any $v\in \mathbb{R}^d$ and any $t_1,t_2\in [0,T]$. Then, for any fixed $t\in [0,T]$, we can choose $Q_i(\cdot):=G_i(\cdot,t)$; it can be proved that such a  $Q_i$ satisfies the above assumption. The verification is based on the analysis in Section \ref{sec:aspaddr}; to avoid tedious repetitions in the text, we have included it in Appendix \ref{apx:1}.
	\end{remark}

	Under Assumptions \ref{asp:g1}, \ref{asp:g2} and \ref{asp:g3}, we have the following results: On $B_i$, we can choose the test function $\psi(v,t):=\partial^{m}\Big({\Q_i}^2\rho^{(m)}_i\Big)$, then we have
	\begin{equation}
		\begin{aligned}
			\frac{1}{2}\frac{d}{dt}\intr {\Q_i}^2\norm{\rho_i^{(m)}}^2dv&=-\intr\inner{\partial^m(G_i\nabla\rho_i)}{\nabla(\Q_i^2\rho_i^{(m)})}dv\\
			&\quad+\intr\partial^{m}\Big(\inner{\J_i}{\nabla\rho_i}\Big)\Q_i^2\rho_i^{(m)}dv\\
			&\quad+\intr {\Q_i}^2\norm{\rho_i^{(m)}}^2dv\\
			&\quad+\intr Q_i^2\partial^m g_i\rho_i^{(m)}dv,
		\end{aligned}
	\end{equation}
	
	For the first term on the right hand side, we have
	\begin{equation}
		\begin{aligned}
			-\intr\inner{\partial^m(G_i\nabla\rho_i)}{\nabla(\Q_i^2\rho_i^{(m)})}dv&\leq -\intr G_i\Q_i^2\norm{\nabla\rho_i^{(m)}}^2dv\\
			&\quad +C\sum_{s=1}^m\norm{\intr\inner{G_i^{(s)}\nabla\rho_i^{(m-s)}}{\Q_i^2\nabla\rho_i^{(m)}}dv}\\
			&\quad +C\norm{\intr\inner{G_i\nabla\rho_i^{(m)}}{\Q_i\rho_i^{(m)}\nabla \Q_i}dv}\\
			&\quad+C\sum_{s=1}^m\norm{\intr\inner{G_i^{(s)}\nabla\rho_i^{(m-s)}}{\Q_i\rho_i^{(m)}\nabla\Q_i}}\\
			&=I+II+III+IV.
		\end{aligned}
	\end{equation}	When $m=0$, $II=0,IV=0$;  when $m\geq 1$,  in order to estimate $II$, use Assumption \ref{asp:g3} and Cauchy-Schwartz inequality to obtain 
	\begin{equation}
		\begin{aligned}
			II&\leq C\sum_{i=0}^{m-1}\intr (1+G_i)Q_i^2\norm{\nabla\rho_i^{(s)}}\norm{\nabla\rho_i^{(m)}}dv\\
			&\leq\epsilon\intr G_i\Q_i^2\norm{\nabla\rho_i^{(m)}}^2dv+C(\epsilon)\sum_{s=0}^{m-1}\intr G_i\Q_i^2\norm{\nabla\rho_i^{(s)}}^2dv\\
			&\quad+C\sum_{s=0}^{m-1}\intr Q_i^3\norm{\nabla \rho_i^{(s)}}^2dv+C\intr Q_i\norm{\nabla\rho^{(m)}}^2dv\\
			&\leq \epsilon\intr G_i\Q_i^2\norm{\nabla\rho_i^{(m)}}^2dv+C(\epsilon)\sum_{s=0}^{m-1}\intr G_i\Q_i^2\norm{\nabla\rho_i^{(s)}}^2dv+C\sum_{s=0}^m\intr \norm{\nabla \rho_i^{(s)}}^2dv\\
			&\quad+C\intr G_i\norm{\nabla\rho^{(m)}}^2dv,
		\end{aligned}
	\end{equation}
	where we used $Q_i^3\leq C(1+G_i)Q_i^2\leq CG_iQ_i^2+C$, since $CQ_i^2\leq CG_iQ_i^2+C$: if $G_i\geq 1$, we have $CQ_i^2\leq CG_iQ_i^2$, else $CQ_i^2\leq C(G_i+1)^2\leq C$; for estimating $IV$, use Assumption \ref{asp:g3} and Cauchy-Schwartz inequality, and we have
	\begin{equation}
		\begin{aligned}
			IV&\leq C\sum_{s=0}^{m-1}\intr G_iQ_i(1+Q_i^{\frac{1}{2}})\norm{\nabla \rho_i^{(s)}}\norm{\rho_i^{(m)}}dv\\
			&\leq C\sum_{s=0}^{m-1}\intr G_iQ_i^2\norm{\nabla\rho_i^{(s)}}^2dv+C\intr G_i\norm{\rho_i^{(m)}}^2dv \\
			&\quad+C\sum_{s=0}^{m-1}\intr G_iQ_i^2\norm{\nabla\rho_i^{(s)}}^2dv+C\intr G_iQ_i\norm{\rho_i^{(m)}}^2dv\\
			&\leq C\sum_{s=0}^{m-1}\intr G_iQ_i^2\norm{\nabla\rho_i^{(s)}}^2dv+C\intr Q_i^2\norm{\rho_i^{(m)}}^2dv+C\intr G_i\norm{\rho_i^{(m)}}^2dv+C\intr \norm{\rho_i^{(m)}}^2dv,
		\end{aligned}
	\end{equation}
	since $G_iQ_i\leq C(1+Q_i)Q_i\leq CQ_i^2+CG_i+C$;
	when $m\geq 0$, to bound $III$, we estimate
	\begin{equation}
		\begin{aligned}
			III&\leq C\intr G_iQ_i(1+Q_i^{\frac{1}{2}})\norm{\nabla\rho_i^{(m)}}\norm{\rho_i^{(m)}}dv\\
			&\leq \frac{1}{2}\epsilon\intr G_i\Q_i^2\norm{\nabla\rho_i^{(m)}}^2dv+C(\epsilon)\intr G_i\norm{\rho_i^{(m)}}^2dv\\
			&\quad+\frac{1}{2}\epsilon\intr G_i\Q_i^2\norm{\nabla\rho_i^{(m)}}^2dv+C(\epsilon)\intr G_iQ_i\norm{\rho_i^{(m)}}^2dv \\
			&\leq \epsilon\intr G_i\Q_i^2\norm{\nabla\rho_i^{(m)}}^2dv+C(\epsilon)\Big[\intr Q_i^2\norm{\rho_i^{(m)}}^2dv+\intr G_i\norm{\rho_i^{(m)}}^2dv+\intr \norm{\rho_i^{(m)}}^2dv\Big],
		\end{aligned}
	\end{equation}
	where again, we used $G_iQ_i\leq CQ_i^2+CG_i+C$; all in all, when $m=0$,  first term is less than
	\begin{equation}
		\begin{aligned}
			-(1-\epsilon)\intr G_i\Q_i^2\norm{\nabla\rho_i}^2dv+C(\epsilon)\Big[\intr \Q_i^2(\rho_i)^2dv+\intr G_i(\rho_i)^2dv+\intr(\rho_i)^2dv\Big],
		\end{aligned}
	\end{equation}
	while, for $m\geq 1$, it is less than
	\begin{equation}
		\begin{aligned}
			&-(1-2\epsilon)\intr G_i\Q_i^2\norm{\nabla\rho_i^{(m)}}^2dv\\
			&+C(\epsilon)\Big[\sum_{s=0}^{m-1}\intr G_i\Q_i^2\norm{\nabla\rho_i^{(s)}}^2dv+\intr Q_i^2\norm{\rho_i^{(m)}}^2dv+\intr G_i\norm{\rho_i^{(m)}}^2dv+\intr \norm{\rho_i^{(m)}}^2dv\Big]\\
			&+C\intr G_i\norm{\nabla\rho_i^{(m)}}^2dv+C\sum_{s=0}^m\intr\norm{\nabla\rho_i^{(s)}}^2dv.
		\end{aligned}
	\end{equation}

	For the second term on the right hand side, similarly, when $m=0$, we have
	\begin{equation}
		\begin{aligned}
			\intr\partial^{m}\Big(\inner{\J_i}{\nabla\rho_i}\Big)\Q_i^2\rho_i^{(m)}dv&\leq C\norm{\intr \inner{\J_i}{\nabla\rho_i}\Q_i^2\rho_idv}\\
			&\leq \epsilon\intr G_i\Q_i^2\norm{\nabla\rho_i}^2dv+C(\epsilon)\intr \Q_i^2(\rho_i)^2dv
		\end{aligned}
	\end{equation}
	while, for $m\geq 1$, we use Assumption \ref{asp:g3} and Cauchy-Schwartz inequality to obtain
	\begin{equation}
		\begin{aligned}
			&\intr\partial^{m}\Big(\inner{\J_i}{\nabla\rho_i}\Big)\Q_i^2\rho_i^{(m)}dv\\&\leq C\norm{\intr \inner{\J_i}{\nabla\rho_i^{(m)}}\Q_i^2\rho_i^{(m)}dv}\\
			&\quad +C\sum_{s=1}^m\norm{\intr \inner{\J_i^{(s)}}{\nabla\rho_i^{(m-s)}}\Q_i^2\rho_i^{(m)}dv}\\
			&\leq \epsilon\intr G_i\Q_i^2\norm{\nabla\rho_i^{(m)}}^2dv+C(\epsilon)\intr \Q_i^2(\rho_i^{(m)})^2dv\\
			&\quad+C\Big[\sum_{s=0}^{m-1}\intr G_i\Q_i^2\norm{\nabla\rho_i^{(s)}}^2dv+\sum_{s=0}^{m-1}\intr Q_i^2\norm{\nabla\rho_i^{(s)}}^2dv+\intr \Q_i^2(\rho_i^{(m)})^2dv\Big].
		\end{aligned}
	\end{equation}
	
	The last term on the right hand side can be estimated by
	\begin{equation}
		\begin{aligned}
			\intr Q_i^2(\rho_i^{(m)})^2dv+\intr Q_i^2\norm{g_i^{(m)}}^2dv&\leq \intr Q_i^2(\rho_i^{(m)})^2dv+C\sum_{s=0}^m\intr Q_i^2\norm{g^{(s)}}^2dv\\
			&\leq \intr Q_i^2(\rho_i^{(m)})^2dv+C\sum_{s=0}^m\intd(1+G^2)\norm{g^{(s)}}^2dv\\
			&\leq \intr Q_i^2(\rho_i^{(m)})^2dv+C.
		\end{aligned}
	\end{equation}
	
	So, by combining the above estimates, for $m=0$, we have
	\begin{equation}
		\begin{aligned}
			\frac{d}{dt}\intr \Q_i^2(\rho_i)^2dv&\leq -2(1-2\epsilon)\intr G_i\Q_i^2\norm{\nabla\rho_i}^2dv\\
			&\quad+C(\epsilon)\Big[\intr \Q_i^2(\rho_i)^2dv+\intr G_i(\rho_i)^2dv+\intr(\rho_i)^2dv\Big]+C.
		\end{aligned}
	\end{equation}
	By choosing $\epsilon$ small enough and using the Gr\"onwall lemma, we have
	\begin{eqnarray}
		&	\intr \Q_i^2(\rho_i)^2\leq C<\infty,\\
		&\intt\intr G_i\Q_i^2\norm{\nabla\rho_i}^2dvdt\leq C<\infty,
	\end{eqnarray}
	and this $C$ does not depend on $i$; when $m\geq 1$, we have
	\begin{equation}\label{eq:g117}
		\begin{aligned}
			&\sum_{s=m}\intr\Q_i^2(v)(\rho_i^{(s)}(v,t))^2dv-\sum_{s=m}\intr\Q_i^2(v)(\rho_i^{(s)}(v,0))^2dv\\
			&\leq -2(1-3\epsilon)\sum_{s=m}\int_0^t\intr G_i(v,r)Q_i^2(v)\norm{\nabla\rho_i^{(s)}(v,r)}^2dvdr\\
			&\quad+C(\epsilon)\sum_{s=m}\intr\Q_i^2(v)(\rho_i^{(s)}(v,r))^2dvdr+C_1(\epsilon,m-1),
		\end{aligned}
	\end{equation}
	here $C_1(\epsilon,m-1)$ depends on $\epsilon$ and 
	\begin{eqnarray*}
		&\intr Q_i^2(\rho^{(s)}_i)^2dv,\\
		&\intt\intr G_i\Q_i^2\norm{\nabla\rho^{(s)}_i}^2dvdt,\\
		&\intr (\rho_i^{(s)})^2dv,\\
		&\intt\intr G_i\norm{\nabla\rho^{(s)}_i}^2dvdt,\\
		&\intd (1+G^2)(g^{(s)})^2dvdt,
	\end{eqnarray*}
	for any $0\leq s<m$, which are all bounded and independent of $i$.
	So, by induction, choose $\epsilon$ small enough and use  Gr\"onwall lemma, we have 
	\begin{eqnarray}
		&	\intr \Q_i^2(\rho^{(m)}_i)^2\leq C<\infty,\label{eq:g118}\\
		&\intt\intr G_i\Q_i^2\norm{\nabla\rho^{(m)}_i}^2dvdt\leq C<\infty\label{eq:g119},
	\end{eqnarray}
	for any $m\geq 0$ and this $C$ does not depend on $i$.
	
	Hence, on $B_i$, we conclude
	\begin{equation}\footnotesize
		\begin{aligned}
			\intr(\partial_t\rho_i^{(m)})^2dv
			&=\intr \partial^m\Big(\div(\J_i\rho_i)\Big)\partial_t\rho_i^{(m)} dv+\intr\Delta\partial^m(G_i\rho_i)\partial_t\rho^{(m)}dv\\
			&\leq \epsilon\intr(\partial_t\rho_i^{(m)})^2dv+\frac{C}{\epsilon}\Big(\norm{\rho_i}^2_{L^{\infty}(0,T;H^{m+2}(B_i))}+\sum_{s\leq m+2}\intr\Q_i^2(\rho_i^{(s)})^2dv\Big),
		\end{aligned}
	\end{equation}
	and therefore we have
	\begin{equation}
		\begin{aligned}
			&\intr(\partial_t\partial^m\rho_i)^2dv\leq \frac{C}{\epsilon(1-\epsilon)}\Big(\norm{\rho_i}^2_{L^{\infty}(0,T;H^{m+2}(B_i))}+\sum_{s\leq m+2}\intr\Q_i^2(\rho_i^{(s)})^2dv\Big)\leq C<\infty,
		\end{aligned}
	\end{equation}
	and this bound $C$ does not depend on $i$. Let $i\to\infty$ and conclude that the solution $\rho$ of  equation \eqref{eq:g1} is in $W^{1,\infty}(0,T;H^m(\mathbb{R}^d))$ for any $m\geq 0$ and function class $\Gamma$.
	\begin{remark}\label{rmk:g2211}
		For equation \eqref{eqg:2}, we should estimate the second term in the following way:
		\begin{equation}
			-\intr\partial^m\div(\J_i\rho_i)\rho^{(m)}dv=\underbrace{-\intr\partial^m\Big(\inner{\J_i}{\nabla\rho_i}\Big)\rho_i^{(m)}dv}_{A}+\underbrace{-\intr\partial^m\Big(\div(\J_i)\rho_i\Big)\rho_i^{(m)}dv}_{B},
		\end{equation}
		for term $A$, we have already given a upper bound; we can bound the  term $B$ from above by
		\begin{equation}
			\begin{aligned}
				B&\leq C\sum_{s=0}^m\norm{\intr \partial^{s}(\div(\J_i))\rho_i^{(m-s)}\rho_i^{(m)}dv}\\
				&\leq C\sum_{s=0}^m\norm{\intr (1+G_i^{\frac{1}{2}})\rho_i^{(m-s)}\rho_i^{(m)}dv}\\
				&\leq C\sum_{s=0}^m\Big[\intr\norm{\rho_i^{(s)}}^2dv+\intr (1+G_i)\norm{\rho_i^{(m)}}^2dv\Big]\\
				&\leq C\sum_{s=0}^m\Big[\intr\norm{\rho_i^{(s)}}^2dv+\intr (1+Q_i)\norm{\rho_i^{(m)}}^2dv\Big]\\
				&\leq C\sum_{s=0}^m\Big[\intr\norm{\rho_i^{(s)}}^2dv+\intr Q_i^2\norm{\rho_i^{(m)}}^2dv\Big]
			\end{aligned}
		\end{equation}
		since $Q_i\leq 1/2(1+Q_i^2)$; so the upper bound of $B$  above can also be absorbed into the right hand side of \eqref{eq:g117}, thus we also have \eqref{eq:g118} and \eqref{eq:g119}. Let $i\to\infty$ to conclude that also the solution of \eqref{eqg:2} in $\Gamma$.
	\end{remark}
	
	Next, we show the uniqueness of the weak solution under Assumptions \ref{asp:g1},\ref{asp:g2} and \ref{asp:g3}.

	We denote $\tilde{\rho}$ be the classical solution in $\Gamma$, which is guaranteed by previous section. Then by Lemma \ref{lm:27}, $\tilde{\rho}$ is also a weak solution. Let $\rho$ be any weak solution to equation \eqref{eq:g1}. We will show $\rho=\tilde{\rho}$ and thus we proved the statement.
	
	We assume $\psi$ be the solution of the following equation
	\begin{equation}\label{eq:gg119}
		\begin{aligned}
			&\partial_t\psi(v,t)+\div(G(v,t)\nabla\psi(v,t))-\div({\J(v,t)}{\psi(v,t)})+\psi(v,t)=h(v,t)\\
			&{\psi}(v,T)=0,
		\end{aligned}
	\end{equation}
	where $h(v,t):=\eta_{R,\delta}(z^{\epsilon})^{\epsilon}$,
	$\eta_{R,\delta}(v,t):=\Big(1-S(\norm{v}-R)\Big)\cdot\chi_{[\delta,T-\delta]}^{\delta^2}(t), z(v,t):=\eta_{R,\delta}(v,t)\Big(\rho(v,t)-\tilde{\rho}(v,t)\Big)$ and $p^\eta(v,t):=(p*\varphi_\eta)(v,t)$, thus $h(v,t)$ has compact support on $\mathbb{R}^d\times (0,T)$.
	
	If we let $\tilde{\psi}(v,t):=\psi(v,T-t)$, then it can be directly verified that
	$\tilde{\psi}$ satisfies the following equation
	\begin{equation}
		\begin{aligned}
			&\partial_t{\tilde{\psi}}=\div\Big(\tilde{G}\nabla\tilde{\psi}\Big)-\div\Big(\tilde{\J}\tilde{\psi}\Big)+\tilde{\psi}+\tilde{h}\\
			&\tilde{\psi}(v,0)=0,
		\end{aligned}
	\end{equation}
	here $\tilde{\J}(v,t):=\J(v,T-t),\tilde{G}(v,t):=G(v,T-t),\tilde{h}(v,t):=-h(v,T-t)$. Hence, by Remark \ref{rmk:g2211}, the above equation has a classical solution in $\Gamma$, and so \eqref{eq:gg119} has a classical solution $\psi\in\Gamma$. Since ${\psi}\in \Gamma$ and $\rho,\tilde{\rho}$ are both weak solutions,  we have
	\begin{equation}\label{eq:gg99}
		\begin{aligned}
			&\intt\intd\Big(\partial_t\psi(v,t)+\div(G(v,t)\nabla\psi(v,t))-\div({\J(v,t)}{\psi(v,t)})+\psi(v,t)\Big)\rho(v,t)dvdt\\
			&=\intt\intd h(v,t)\rho(v,t)dvdt=-\intd\varrho(v)\psi(v,0)dv-\intt\intd g(v,t)\psi(v,t)dvdt.
		\end{aligned}
	\end{equation}
	and
	\begin{equation}\label{eq:gg100}
		\begin{aligned}
			&\intt\intd\Big(\partial_t\psi(v,t)+\div(G(v,t)\nabla\psi(v,t))-\div({\J(v,t)}{\psi(v,t)})+\psi(v,t)\Big)\tilde{\rho}(v,t)dvdt\\
			&=\intt\intd h(v,t)\tilde{\rho}(v,t)dvdt=-\intd\varrho(v)\psi(v,0)dv-\intt\intd g(v,t)\psi(v,t)dvdt.
		\end{aligned}
	\end{equation}
	Subtract \eqref{eq:gg99} from \eqref{eq:gg100} to obtain
	\begin{equation}
		\intt\intd h(v,t)\Big(\rho(v,t)-\tilde{\rho}(v,t)\Big)dvdt=\intt\intd \Big(\Big(\eta_{R,\delta}(\rho-\tilde{\rho})\Big)^{\epsilon}\Big)^2dvdt=0,
	\end{equation}
	for any $R>0,\delta>0\text{ small, and }\epsilon\ll\delta$, so $\tilde{\rho}=\rho$.
	\begin{remark}\label{rmk:rmk220}
		The Galerkin approximation procedure described above also applies when the space $\mathbb{R}^d$ is substituted with a compact manifold $M$ {without boundary}. Therefore, the main theorem remains valid for equations \eqref{eq:g1} and \eqref{eqg:2} on smooth compact manifolds. In fact, when $M$ is a smooth compact manifold {without boundary}, we can simply set $B_i=M$ and utilize the first $K$ eigenfunctions of the Laplace-Beltrami operator on $M$ in place of $H_{i,K}$. Consequently, all the analyses from the previous sections continue to hold.
	\end{remark}

	\subsection{Applying the main theorem to CBO}\label{sec:cbo}
	Here we make the following assumption: the cost function $f: \mathbb{R}^d \rightarrow \mathbb{R}$ is bounded from below with $\underline{f}:=\inf f$. Moreover, there exist constants $L_f,M,c_l$ and $c_u>0$ such that
	\begin{equation}\label{condition:f}
		\begin{cases}|f(v)-f(u)| \leq L_f(\norm{v}+\norm{u})\norm{v-u} & \text { for all } v, u \in \mathbb{R}^d, \\ f(v)-\underline{f} \leq c_u\left(1+\norm{v}^2\right) & \text { for all } v\in \mathbb{R}^d ,\\
			f(v)-\underline{f}\geq c_l\norm{v}^2& \text { for all } \norm{v}\geq M. \end{cases}
	\end{equation}
	Then \cite[Theorem 3.2]{carrillo2018analytical} ensures that the CBO dynamics
	\begin{equation}\label{eq:gcbo}
		\begin{aligned}
			&\partial_t\rho=\div\Big((v-v_\alpha(\rho,t))\rho\Big)+\Delta\Big(\norm{v-v_\alpha(\rho,t)}^2\rho\Big)\\
			&\rho(v,0)=\varrho(v),
		\end{aligned}
	\end{equation}
	here $v_\alpha(\rho,t):=\intd ve^{-\alpha f(v)}d\rho(v,t)/\intd e^{-\alpha f(v)}d\rho(v,t)$,
	has a solution in $\mathcal{C}(0,T,\mathcal{P}(\mathbb{R}^d))$~(see Remark \ref{rmke:2} below), which is also a weak solution defined in this work by It\^o's lemma. We denote this solution as $\bar{\rho}$ and $v_\alpha(\bar{\rho},t)$ is $1/2$-H\"older continuous in time, see also \cite[Theorem 3.2]{carrillo2018analytical}.
	Let $v_\alpha(t):=v_\alpha(\bar{\rho},t)$. Then, by Theorem \ref{thm:main}, the following equation
	\begin{equation}
		\begin{aligned}
			&\partial_t\rho=\div\Big((v-v_\alpha(t))\rho\Big)+\Delta\Big(\norm{v-v_\alpha(t)}^2\rho\Big)\\
			&\quad=\div\Big(\norm{v-v_\alpha(t)}^2\nabla\rho\Big)+3\inner{v-v_\alpha(t)}{\nabla\rho}+3d\rho,\\
			&\rho(v,0)=\varrho(v),
		\end{aligned}\label{eq:CBOrev}
	\end{equation}
	where $G(v,t):=\norm{v-v_\alpha(t)}^2,\J(v,t):=v-v_\alpha(t),g(v,t)=0$, satisfies Assumptions \ref{asp:g1}, \ref{asp:g2} and \ref{asp:g3}. By Remark \ref{rmke:g211} equation \eqref{eq:CBOrev} has a unique weak solution $\rho$ and $\rho\in\Gamma$, thus $\bar{\rho}=\rho\in\Gamma$.

	
	\subsubsection{Uniqueness of the CBO dynamics}
	Now, we show that the measure solution to the non-linear equation \eqref{eq:gcbo} is unique.
	\begin{definition}
		We say $\rho$ is a measure solution to the CBO dynamics, if $\rho\in\mathcal{C}(0,T;\mathcal{P}(\mathbb{R}^d)),\forall T>0$ and for any $\psi(v,t)\in \mathcal{C}^{2,1}(\mathbb{R}^d\times[0,\infty))$ with $\norm{\nabla\psi},\norm{\partial_i\partial_j\psi}$ bounded, we have
		\begin{equation}
			\begin{aligned}
				&\int_{0}^{s}\intd\Big(\partial_t\psi(v,t)+\Delta \psi(v,t)\norm{v-v_\alpha(\rho,t)}^2d\rho(v,t)-\inner{\nabla \psi(v,t)}{v-v_\alpha(\rho,t)}\Big)d\rho(v,t)dt\\
				&=\intd\psi(v,s)d\rho(v,s)-\intd\psi(v,0)d\varrho(v),
			\end{aligned}
		\end{equation}
		for any $s\in [0,\infty)$.
	\end{definition}
	\begin{remark}\label{rmke:2}
		What we mean  with $\rho\in\mathcal{C}(0,T;\mathcal{P}(\mathbb{R}^d)),\forall T>0$ is more explicitly explained as follows:  for each $t\in [0,T]$, $\rho(\cdot,t)\in\mathcal{P}(\mathbb{R}^d)$, the space of probability measure, and $\intd h(v)d\rho(v,t)$ is continuous for any $h\in \mathcal{C}_b(\mathbb{R}^d)$, the space of bounded continuous function on $\mathbb{R}^d$.
	\end{remark}
	
	By Sobolev embedding, it is direct to derive that if $\psi\in\Gamma$, then $\psi$ satisfies the condition in the above definition. Therefore, if $\rho$ is a measure solution, then it is also a weak solution.
	
	Let $\rho$ be a measure solution and choose the test functions $e^{-\alpha f(v)}$ and $v_ie^{-\alpha f(v)},i=1,\ldots,d$ which are all bounded on $\mathbb{R}^d$, 
	so that $\intd e^{-\alpha f(v)}d\rho(v,t)$ and $\intd ve^{-\alpha f(v)}d\rho(v,t)$ are continuous on $[0,\infty)$, since $\rho\in\mathcal{C}(0,T;\mathcal{P}(\mathbb{R}^d)),\forall T>0$; combine this with the fact that for each $t\in [0,\infty)$, $\|v_\alpha(\rho,t)\|<\infty$, then we have $v_{\alpha}(\rho,t):=\intd ve^{-\alpha f(v)}d\rho(v,t)/\intd e^{-\alpha f(v)}d\rho(v,t)$ is continuous on $[0,\infty)$. Thus by Theorem \ref{thm:main}, for this fixed $v_{\alpha}(\rho,t)$, the following equation 
	\begin{equation}\label{eq:g127}
		\begin{aligned}
			&\partial_t\dot{\rho}=\Delta\Big(\norm{v-v_\alpha(\rho,t)}\dot{\rho}\Big)+\div\Big((v-v_\alpha(\rho,t))\dot{\rho}\Big),\\
			&\dot{\rho}(v,0)=\varrho,
		\end{aligned}
	\end{equation}
	has a unique weak solution, which we denote as $\dot{\rho}$. Such solution belongs to $\Gamma$, because $\rho$ is a measure solution, so it is also a weak solution, hence $\rho=\dot{\rho}\in\Gamma$. Since the law of the process $V_t$, which satisfies the following linear SDE
	\begin{equation}\label{eq:g126}
		dV_t=-(V_t-v_{\alpha}(\rho,t))dt+\norm{V_t-v_\alpha(\rho,t)}dB_t,
	\end{equation}
	is also a measure solution~(by It\^o's lemma, thus a weak solution) of \eqref{eq:g127}, so $\rho$ must be the law of the process $V_t$. So in summary, we have shown that any measure solution to the non-linear PDE \eqref{eq:gcbo} is in function class $\Gamma$ and is the law of the nonlinear SDE \begin{equation}
		dV_t=-(V_t-v_{\alpha}(\rho,t))dt+\norm{V_t-v_\alpha(\rho,t)}dB_t,
	\end{equation} with $\rho(\cdot,t)$ the distribution of $V_t$, and vice versa by \cite[Theorem 3.2]{carrillo2018analytical}. By \cite[Theorem 3.2]{carrillo2018analytical}, the nonlinear SDE has a unique strong solution, so the nonlinear PDE \eqref{eq:gcbo} also has a unique measure solution, and this measure solution is in $\Gamma$.

	\subsubsection{Positivity of CBO solutions for  $d>1$}
	Since the measure solution of equation \eqref{eq:gcbo} is unique and in the class $\Gamma$, in what follows, for simplicity, we will write $v_\alpha(\rho,t)$ as $v_\alpha(t)$, thus equation \eqref{eq:gcbo} becomes
	\begin{equation}
		\partial_t\rho(v,t)=\div((v-v_{\alpha}(t))\rho(v,t))+\Delta(\norm{v-v_{\alpha}(t)}^2\rho(v,t)).
	\end{equation}
	Let $v=v+v_{\alpha}(t)$, then we have
	\begin{equation}\label{eq:665}
		\partial_t\rho(v+v_{\alpha}(t),t)=\div(v\rho(v+v_{\alpha}(t),t))+\Delta(\norm{v}^2\rho(v+v_{\alpha}(t),t)).
	\end{equation}
	Now, let $\tilde{\rho}(v,t):=\rho(v+v_{\alpha}(t),t)$, then
	\begin{equation}
		\begin{aligned}
			\partial_t\tilde{\rho}(v,t)&=\frac{d}{dt}\rho(v+v_{\alpha}(t),t)\\
			&=\partial_t\rho(v+v_{\alpha}(t),t)+\inner{\nabla\rho(v+v_{\alpha}(t),t)}{\frac{d}{dt}v_\alpha(t)}\\
			&=\div(v\rho(v+v_{\alpha}(t),t))+\Delta(\norm{v}^2\rho(v+v_{\alpha}(t),t))\\
			&\quad+\inner{\nabla\rho(v+v_{\alpha}(t),t)}{\frac{d}{dt}v_\alpha(t)}\\
			&=\div(v\tilde{\rho}(v,t))+\Delta(\norm{v}^2\tilde{\rho}(v,t))+\inner{\nabla\tilde{\rho}(v,t)}{P(t)}\\
			&=\norm{v}^2\Delta\tilde{\rho}(v,t)+\inner{5v+P(t)}{\nabla\tilde{\rho}(v,t)}+3d\tilde{\rho}(v,t),
		\end{aligned}
	\end{equation}
	here $P(t):=\frac{d}{dt}v_\alpha(t)$~(we can prove this is bounded by assuming the growth rate of $\nabla f,\Delta f$  at most polynomial, see Lemma \ref{lem:g211} below). 
	At this point, we have to assume $d>1$.
	Then on $B_R(0)\setminus B_{\epsilon}(0)\times [0,T]$ for any $0<\epsilon<R<\infty$, we have $\tilde{\rho}$ is the classical solution to 
	\begin{equation}
		\partial_t\tilde{\rho}(v,t)=\norm{v}^2\Delta\tilde{\rho}(v,t)+\inner{5v+P(t)}{\nabla\tilde{\rho}(v,t)}+3d\tilde{\rho}(v,t),
	\end{equation}
	and on $B_R(0)\setminus B_{\epsilon}(0)\times [0,T]$, the equation is uniform parabolic and then by the parabolic Harnack inequality~ see \cite[Section 7.1, Theorem 10]{evans2022partial}, we have
	\begin{equation}
		\sup_{B_R(0)\setminus B_{\epsilon}(0)} \tilde{\rho}(\cdot,t_1)\leq C\inf_{B_R(0)\setminus B_{\epsilon}(0)} \tilde{\rho}(\cdot,t_2),
	\end{equation}
	for any $0<t_1<t_2\leq T$. If $\rho(v,0)=\varrho$ is not identically $0$, then we can find some $v\in \mathbb{R}^d$, such that $\varrho(B_r(v))>0$. Then by \cite[Proposition 23]{B1-fornasier2021global}, we have $\rho(B_r(v),t_1)>0$ for any $t_1>0$ and thus we can find $R>0$ large enough and $\epsilon<0$ small enough, such that
	\begin{equation}
		\sup_{B_R(0)\setminus B_{\epsilon}(0)} \tilde{\rho}(\cdot,t_1)>0,
	\end{equation}
	and thus by the parabolic Harnack inequality, we have
	\begin{equation}
		\inf_{B_R(0)\setminus B_{\epsilon}(0)} \tilde{\rho}(\cdot,t_2)>0,
	\end{equation}
	for any $t_2>t_1>0$. Since $R,\epsilon,t_1,t_2$ are arbitrary, so we can conclude that $\rho(v,t)>0$ for any $v\not= v_\alpha(t)$, for any $t>0$.
	\begin{remark}
		To apply the parabolic Harnack inequality, we need $B_R(0)\setminus B_\epsilon(0)$ to be connected, this is only possible when $d>1$. Actually, when $d=1$, for the equation
		\begin{eqnarray}
			\partial_t\rho(v,t)=\div((v-v^*)\rho(v,t))+\Delta(\norm{v-v^*}^2\rho(v,t))
		\end{eqnarray}
		where $v^*$ is any fixed value in $\mathbb{R}$, if the initial distribution $\varrho(0,\cdot)$ has support included in $(-\infty,v^*)$, then $\rho(t,v)=0$, for any $t\geq 0,v>v^*$. To prove it, just choose the test function $\phi(v)=\chi_{[v^*,\infty)}(v)$, here $\chi_{[v^*,\infty)}$ is the indicator function of the set $[v^*,\infty)$. Then by using integration by parts~(which is applicable, since on the boundary $v-v^*=0$), we have
		\begin{eqnarray}
			\frac{d}{dt}\int \phi(v)d\rho(t,v)=0,
		\end{eqnarray}
		thus $\int \phi(v)d\rho(t,v)=0$, for any $t\geq 0$.
	\end{remark}
	\begin{lemma}\label{lem:g211}
		Assume 
		\begin{equation}\label{condition:ff}
			\begin{aligned}
				&\norm{\Delta f}\leq C(1+\norm{v}^p),\\
				&\norm{\nabla f}\leq C(1+\norm{v}^q),\text{ for some polynomial order of $p,q>0$}.
			\end{aligned}
		\end{equation}
		Then it holds $\norm{\frac{d}{dt}v_{\alpha}(t)}<C<\infty$.
	\end{lemma}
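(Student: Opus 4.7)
The plan is to write $v_{\alpha}(t) = N(t)/D(t)$ with
\[
D(t) := \intd e^{-\alpha f(v)}\rho(v,t)\,dv, \qquad N(t) := \intd v\,e^{-\alpha f(v)}\rho(v,t)\,dv,
\]
and to apply the quotient rule once $N, D, N', D'$ are shown to be uniformly bounded in $t$ and $D(t)$ is shown to be uniformly bounded below away from zero. The key ingredient making every subsequent estimate routine is the super-polynomial decay of $e^{-\alpha f}$ at infinity: the coercivity $f(v)-\underline{f}\geq c_l\norm{v}^2$ for $\norm{v}\geq M$ from \eqref{condition:f} gives $\sup_v\norm{v}^k e^{-\alpha f(v)}<\infty$ for every $k\geq 0$. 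Combined with \eqref{condition:ff}, this means any polynomial expression in $v$, $\nabla f(v)$ and $\Delta f(v)$ multiplied by $e^{-\alpha f(v)}$ is a bounded function on $\mathbb{R}^d$.

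I would then differentiate $N$ and $D$ by applying It\^o's formula to $e^{-\alpha f(V_t)}$ and $V_t^i e^{-\alpha f(V_t)}$ along the CBO SDE \eqref{eq:g126} and taking expectations; the stochastic integrals are true martingales thanks to the boundedness just mentioned. This yields explicit expressions for $D'(t)$ and $N'(t)$ as integrals against $\rho(\cdot,t)$ of bounded integrands built from $\nabla f$, $\Delta f$, $v-v_{\alpha}(t)$, and $e^{-\alpha f}$. Provided $\sup_t\norm{v_\alpha(t)}<\infty$, each integrand is bounded uniformly in $t$, so $\sup_t |D'(t)|$ and $\sup_t\norm{N'(t)}$ are finite since $\rho(\cdot,t)$ is a probability measure. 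Continuity of $D', N'$ in $t$ follows from dominated convergence with the same bounded dominating functions, hence $D, N\in C^1([0,\infty))$.

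The remaining task is to bound $\norm{v_\alpha(t)}$ uniformly in $t$. The super-polynomial decay gives $\sup_t\norm{N(t)}\leq \sup_v\norm{v}e^{-\alpha f(v)} <\infty$ directly. The uniform \emph{lower} bound on $D(t)$ is the genuinely delicate step: I would invoke uniform propagation of second moments, $\sigma_2 := \sup_t\intd\norm{v}^2\rho(v,t)\,dv<\infty$ (standard for CBO, following from It\^o applied to $\norm{V_t}^2$ on \eqref{eq:g126} together with \eqref{condition:f}; cf.\ \cite[Lemma 3.3]{carrillo2018analytical}), then use Markov's inequality to select $R$ with $\rho(B_R,t)\geq 1/2$ for all $t$, and finally invoke $f(v)-\underline{f}\leq c_u(1+\norm{v}^2)$ to obtain
\[
D(t)\,\geq\, \inf_{\norm{v}\leq R} e^{-\alpha f(v)}\cdot \rho(B_R,t)\,\geq\, \tfrac{1}{2}\exp\!\bigl(-\alpha(\underline{f}+c_u(1+R^2))\bigr)\,=:\,\delta\,>\,0.
\]
Hence $\sup_t\norm{v_\alpha(t)}\leq \sup_t\norm{N(t)}/\delta <\infty$, and the quotient rule closes the proof: $\norm{\tfrac{d}{dt} v_\alpha(t)}\leq \bigl(\norm{N'(t)}D(t)+\norm{N(t)}|D'(t)|\bigr)/D(t)^2\leq C$.

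The principal obstacle is the uniform lower bound $D(t)\geq\delta>0$, which rests entirely on propagating uniform-in-time second moments of $\rho_t$; once that is secured, the Gaussian-like decay of $e^{-\alpha f}$ absorbs every polynomial factor coming from \eqref{condition:ff} and powers of $v$, and the remaining estimates are routine.
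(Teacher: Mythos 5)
Your proposal is correct and follows essentially the same route as the paper: quotient rule on $v_\alpha = N/D$, It\^o's formula on $e^{-\alpha f}$ and $v\,e^{-\alpha f}$ to compute $N'$ and $D'$, and then appeal to uniform-in-time second-moment bounds for $\rho_t$ and coercivity of $f$ to bound everything. The paper simply compresses your last three paragraphs into a citation of \cite[Lemma 3.3]{carrillo2018analytical} and \cite[Theorem 2.3, Proposition A.3]{gerber2023propagation} (which supply the uniform moment bound and the resulting lower bound on $D$), whereas you spell out the mechanism: super-polynomial decay of $e^{-\alpha f}$ absorbing all polynomial factors, Markov's inequality for the lower bound on $D(t)$. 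Same argument, more explicit.
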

	
	\begin{proof}
		Use It\^o formula to functions $e^{-\alpha f(v)},ve^{-\alpha f(v)}$, and obtain 
		\begin{equation}
			\frac{d}{dt}\frac{\intd ve^{-\alpha f(v)}d\rho_t}{\intd e^{-\alpha f(v)}d\rho_t}=\frac{\frac{d}{dt}\intd ve^{-\alpha f(v)}d\rho_t}{\intd e^{-\alpha f(v)}d\rho_t}-v_{\alpha}(\rho_t)\frac{\frac{d}{dt}\intd e^{-\alpha f(v)}d\rho_t}{\intd e^{-\alpha f(v)}d\rho_t},
		\end{equation}
		where
		\begin{equation}
			\begin{aligned}
				\frac{d}{dt}\intd ve^{-\alpha f(v)}d\rho_t&=-\intd \inner{\nabla(ve^{-\alpha f(v)})}{v-v_{\alpha}(\rho_t)}d\rho_t+\intd\Delta(ve^{-\alpha f(v)})\norm{v-v_{\alpha}(\rho_t)}^2d\rho_t\\
				&=-\intd (v-v_{\alpha}(\rho_t))e^{-\alpha f(v)}d\rho_t+\alpha\intd\inner{\nabla f(v)}{v-v_{\alpha}(\rho_t)}ve^{-\alpha f(v)}d\rho_t\\
				&\quad+\alpha^2\intd v\norm{v-v_{\alpha}(\rho_t)}^2\norm{\nabla f(v)}^2e^{-\alpha f(v)}d\rho_t\\
				&\quad-\alpha\intd v\norm{v-v_{\alpha}(\rho_t)}^2\Delta f(v)e^{-\alpha f(v)}d\rho_t-2\alpha\intd \norm{v-v_{\alpha}(\rho_t)}^2\nabla f(v)e^{-\alpha f(v)}d\rho_t,
			\end{aligned}
		\end{equation}
		and
		\begin{equation}
			\begin{aligned}
				\frac{d}{dt}\intd e^{-\alpha f(v)}d\rho_t&=-\intd \inner{\nabla(e^{-\alpha f(v)})}{v-v_{\alpha}(\rho_t)}d\rho_t+\intd\Delta(e^{-\alpha f(v)})\norm{v-v_{\alpha}(\rho_t)}^2d\rho_t\\
				&=\alpha\intd\inner{\nabla f(v)}{v-v_{\alpha}(\rho_t)}e^{-\alpha f(v)}d\rho_t+\alpha^2\intd\norm{v-v_{\alpha}(\rho_t)}^2\norm{\nabla f(v)}^2e^{-\alpha f(v)}d\rho_t\\
				&\quad -\alpha\intd\norm{v-v_{\alpha}(\rho_t)}^2\Delta f(v)e^{-\alpha f(v)}d\rho_t.
			\end{aligned}
		\end{equation}Therefore by a direct generalization of \cite[Lemma 3.3]{carrillo2018analytical}, see \cite[Theorem 2.3, Proposition A.3]{gerber2023propagation}, and an application of the growth assumption on $\nabla f,\Delta f$, we have $\norm{\partial_tv_{\alpha}(t)}<C<\infty$.
	\end{proof}
	\subsubsection{Conclusion}
	In summary in Section \ref{sec:cbo}, we proved the following theorem.
	\begin{theorem}\label{thm:positivity}
		Let the objective $f$ satisfies condition \eqref{condition:f}, then the non-linear PDE \eqref{eq:gcbo} has a unique measure solution $\rho$, and this solution belongs to class $\Gamma$. If we further assume condition \eqref{condition:ff}, then $\rho(v,t)>0$, for any $t>0,v\in\mathbb{R}^d\setminus \{v_\alpha(\rho,t)\}$, for $d>1$.
	\end{theorem}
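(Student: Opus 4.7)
My plan is to decouple the nonlinearity by freezing the consensus point and then invoke Theorem \ref{thm:main}. I would first cite \cite[Theorem 3.2]{carrillo2018analytical} to obtain at least one measure solution $\bar\rho\in\mathcal{C}(0,T;\mathcal{P}(\mathbb{R}^d))$ together with the $1/2$-Hölder continuity of $t\mapsto v_\alpha(\bar\rho,t)$. Reading \eqref{eq:gcbo} as the linear drift-diffusion equation \eqref{eq:g1} (or equivalently \eqref{eqg:2}) with $G(v,t)=\|v-v_\alpha(t)\|^2$, $\boldsymbol{J}(v,t)=v-v_\alpha(t)$, and $f\equiv 0$, Assumptions \ref{asp:g1} and \ref{asp:g2} are immediate from the quadratic form of $G$, while Assumption \ref{asp:g3} follows from Remark \ref{rmke:g211}. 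Theorem \ref{thm:main} then yields a unique weak solution $\rho\in\Gamma$ of the linear problem. To close the loop I would show that \emph{every} measure solution of \eqref{eq:gcbo} is a weak solution of the frozen linear equation (by Itô/integration by parts on test functions in $\Gamma$), so it must coincide with $\rho$. Combining this with the law-of-SDE characterization from \cite{carrillo2018analytical}---which identifies measure solutions of \eqref{eq:gcbo} with laws of the nonlinear SDE and gives uniqueness of that SDE---delivers simultaneously the existence, uniqueness, and regularity $\rho\in\Gamma$.

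\textbf{Positivity via a co-moving frame and Harnack.} For the second statement I would pass to the co-moving frame $\tilde\rho(v,t):=\rho(v+v_\alpha(t),t)$; the chain rule converts the Fokker--Planck equation into the non-divergence form
\begin{equation*}
\partial_t\tilde\rho=\|v\|^2\Delta\tilde\rho+\langle 5v+P(t),\nabla\tilde\rho\rangle+3d\,\tilde\rho,\qquad P(t):=\tfrac{d}{dt}v_\alpha(t).
\end{equation*}
On each annulus $A_{\epsilon,R}:=B_R(0)\setminus B_\epsilon(0)$ the diffusion coefficient $\|v\|^2$ is bounded below by $\epsilon^2$ and above by $R^2$, so---\emph{assuming} $P$ is bounded on $[0,T]$---this is a classical uniformly parabolic equation with smooth bounded coefficients. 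The parabolic Harnack inequality \cite[\S 7.1, Theorem 10]{evans2022partial} then gives $\sup_{A_{\epsilon,R}}\tilde\rho(\cdot,t_1)\le C\inf_{A_{\epsilon,R}}\tilde\rho(\cdot,t_2)$ for $0<t_1<t_2$. To seed a strictly positive supremum I would invoke \cite[Proposition 23]{B1-fornasier2021global}, which propagates positive mass from $\varrho$ to $\rho(\cdot,t_1)$ on any ball. Choosing $R,\epsilon$ so that this ball sits inside $A_{\epsilon,R}$ and then letting $\epsilon\downarrow 0$, $R\uparrow\infty$, $t_1\downarrow 0$ forces $\rho(v,t)>0$ for every $t>0$ and every $v\ne v_\alpha(t)$.

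\textbf{The main obstacle: boundedness of $P(t)$.} The critical technical step---and the only place where condition \eqref{condition:ff} is actually needed---is showing $\sup_{t\in[0,T]}|P(t)|<\infty$; without it the lower-order drift in the co-moving equation is unbounded and the Harnack inequality is unavailable in its standard form. I would handle it by applying Itô's formula to the bounded test functions $e^{-\alpha f(v)}$ and $v e^{-\alpha f(v)}$, differentiating the quotient defining $v_\alpha(t)$, and expressing $\dot v_\alpha(t)$ as Gibbs-weighted moments of $\nabla f$ and $\Delta f$. Polynomial growth bounds on these derivatives, paired with finiteness of sufficiently high moments of $\rho_t$ (as in \cite[Lemma 3.3]{carrillo2018analytical} and its refinement \cite[Theorem 2.3, Proposition A.3]{gerber2023propagation}), would then bound $P$ uniformly on $[0,T]$. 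Once $P$ is controlled the rest of the positivity argument is a routine application of Harnack on an exhausting family of annuli.
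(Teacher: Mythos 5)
Your proposal follows the paper's proof essentially step by step: freezing $v_\alpha(t)$ via \cite[Theorem 3.2]{carrillo2018analytical} and invoking Theorem \ref{thm:main} with $G=\|v-v_\alpha\|^2$, $\boldsymbol J=v-v_\alpha$, $f\equiv0$ for existence/uniqueness/$\Gamma$-regularity; passing to the co-moving frame and applying the parabolic Harnack inequality on annuli seeded by \cite[Proposition 23]{B1-fornasier2021global} for positivity; and bounding $P(t)=\dot v_\alpha(t)$ via Itô on $e^{-\alpha f}$, $ve^{-\alpha f}$ plus moment estimates, which is precisely the paper's Lemma \ref{lem:g211}. The route and the key ingredients are the same.
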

	
	\begin{remark}
		{For a connected smooth compact manifold $M$ without boundary and embedded in $\mathbb R^d$, we can consider the following CBO equation
			\begin{equation}\label{eq:cbom}
				\partial_t \rho_t=\lambda \div_{M}\Big(P(v)\left(v-v_{\alpha}\left(\rho_t\right)\right) \rho_t\Big)+\frac{\sigma^2}{2} \Delta_{M}\left(\|v-v_{\alpha}\left(\rho_t\right)\|^2 \rho_t\right), \quad t>0, v \in M,
			\end{equation}
			where $\lambda,\sigma>0$ are constants and $P(v)$ is the orthogonal projection onto the tangent space at $v \in M$. The model \eqref{eq:cbom} was introduced, for instance, in \cite{fornasier2020consensus_hypersurface_wellposedness} for the case of smooth hypersurfaces (e.g., torus and hypersphere) described by
			\begin{equation}
				M=\{v\in\mathbb{R}^d: \operatorname{dist}(v,M)=0\},
			\end{equation}
			where the signed distance function is defined by $\operatorname{dist}(v,M):=\inf_{u\in M}\norm{v-u}$ if $v$ is on the exterior of $M$, else $\operatorname{dist}(v,M):=-\inf_{u\in M}\norm{v-u}$.
			In this case of $M$ being an hypersurface, we have  
			$P(v)=\mathrm{I}_d-(\nabla \operatorname{dist}(v,M))(\nabla \operatorname{dist}(v,M))^\top$, and it can be verified that all the assumptions are satisfied. Thus, by Remark \ref{rmk:rmk220}, Theorem \ref{thm:positivity} also holds to the CBO equation \eqref{eq:cbom} on $M$.
		}
	\end{remark}

	\bibliography{bib}
	\bibliographystyle{abbrv}

	\section{Appendix}\label{apx:1}
	The proof directly follow the proofs of Lemmas \ref{lem:g1}, \ref{lem:g25} and \ref{lem:g4}. Let us fix any $t\in [0,T]$, and for simplicity, we denote $G(\cdot):=G(\cdot,t)$. Then we first have 
	\begin{equation}
		\begin{aligned}
			\norm{\frac{\partial}{\partial v_j}G(\frac{R_i v}{\norm{v}})}&=\sum_{k=1}^d\norm{\frac{\partial}{\partial v'_k}G(v')\mid_{v'=\frac{R_iv}{\norm{v}}}}\norm{\frac{\partial}{\partial v_j}\frac{R_iv_k}{\norm{v}}}\\
			&\leq C\Big(1+G^{\frac{1}{2}}(\frac{R_iv}{\norm{v}})\Big),
		\end{aligned}
	\end{equation}
	by the condition in Remark \ref{rmke:g211}. 
	
	Secondly, let $G_i':=1+G(\frac{R_iv}{\norm{v}})$, then on $B_{R_i}(0)\setminus B_{R_i-1}(0)$~(the other cases are direct by the condition in Remark \ref{rmke:g211}), we have
	\begin{equation}
		\begin{aligned}
			\norm{\overline{G}_i^{(1)}}&=\norm{G^{(1)}(1-S_i)-GS_i^{(1)}+G_i'^{(1)}S_i+G_i'S_i^{(1)}}\\
			&\leq C\Big[\norm{G^{(1)}}(1-S_i)+\norm{G_i'^{(1)}}S_i\Big]+C\norm{G-G_i'}\\
			&\leq C\Big[\Big(1+G^{\frac{1}{2}}\Big)(1-S_i)+\Big(1+G^{\frac{1}{2}}(\frac{R_iv}{\norm{v}})\Big)S_i\Big]\\
			&\quad+C\Big(1+\norm{\nabla G(v')}\norm{v-\frac{R_iv}{\norm{v}}}\Big),
		\end{aligned}
	\end{equation}
	for some $v'$ between $v$ and $\frac{R_iv}{\norm{v}}$ by the mean value theorem; further, use $\norm{v-\frac{R_iv}{\norm{v}}}\leq 1$ for any $v\in B_{R_i}(0)\setminus B_{R_i-1}(0)$, we have
	\begin{equation}
		\norm{\overline{G}_i^{(1)}}\leq C\Big[\Big(1+G^{\frac{1}{2}}\Big)(1-S_i)+\Big(1+G^{\frac{1}{2}}(\frac{R_iv}{\norm{v}})\Big)S_i\Big]+C\Big(1+G^{\frac{1}{2}}(v')\Big),
	\end{equation}
	use Assumption \ref{asp:g2}, we have for any $v\in B_{R_i}(0)\setminus B_{R_i-1}(0)$ that
	\begin{eqnarray}
		&1+G\leq C\Big(1+G(\frac{R_iv}{\norm{v}})\Big),\\
		&1+G(\frac{R_iv}{\norm{v}})\leq C\Big(1+G\Big),
	\end{eqnarray}
	then 
	\begin{eqnarray}
		&1+G\leq C\Big(1+\overline{G}_i\Big),\\
		&1+G(\frac{R_iv}{\norm{v}})\leq C\Big(1+\overline{G}_i\Big),\\
		&1+G(v')\leq C\Big(1+\overline{G}_i\Big),
	\end{eqnarray}
	thus
	\begin{equation}
		\begin{aligned}
			&C\Big[\Big(1+G^{\frac{1}{2}}\Big)(1-S_i)+\Big(1+G^{\frac{1}{2}}(\frac{R_iv}{\norm{v}})\Big)S_i\Big]+C\Big(1+G^{\frac{1}{2}}(v')\Big)\\
			&\leq C\Big[\Big(1+(1+G)^{\frac{1}{2}}\Big)(1-S_i)+\Big(1+\Big(1+G(\frac{R_iv}{\norm{v}})\Big)^{\frac{1}{2}}\Big)S_i\Big]+C\Big(1+\Big(1+G(v')\Big)^{\frac{1}{2}}\Big)\\
			&\leq C\Big(1+\Big(1+\overline{G}_i\Big)^{\frac{1}{2}}\Big)\\
			&\leq C\Big(1+\overline{G}_i^{\frac{1}{2}}\Big),
		\end{aligned}
	\end{equation}
	by simple inequality $(1+x)^{1/2}\leq 1+x^{1/2},x\geq 0$, thus we proved $\norm{\overline{G}_i^{(1)}}\leq C\Big(1+\overline{G}_i^{{1}/{2}}\Big)$.
	
	Thirdly, we have
	\begin{equation}
		\begin{aligned}
			\norm{G_i^{(1)}}&\leq H_i\overline{G}_i \norm{H_i^{(1)}}+H_i^2\norm{\overline{G}^{(1)}_i}\\
			&\leq H_i\overline{G}_i^{\frac{1}{2}} \frac{\overline{G}_i^{\frac{1}{2}}}{n_i}+CH_i^2\Big(1+\overline{G}^{\frac{1}{2}}_i\Big)\\
			&\leq C\Big(1+G_i^{\frac{1}{2}}\Big),
		\end{aligned}
	\end{equation}
	since $\overline{G_i}^{1/2}/n_i\leq C$.
	
	Finally, use the condition in Remark \ref{rmke:g211}, we have 
	\begin{equation}
		\begin{aligned}
			1+\overline{G}_i(v,t_1)&:=\Big(1+G(v,t_1)\Big)(1-S_i)+\Big(2+G(\frac{R_iv}{\norm{v}},t_1)\Big)S_i\\
			&\leq C\Big[\Big(1+G(v,t_2)\Big)(1-S_i)+\Big(2+G(\frac{R_iv}{\norm{v}},t_2)\Big)S_i\Big]\\
			&=C\Big[1++G(v,t_2)(1-S_i)+\Big(1+G(\frac{R_iv}{\norm{v}},t_2)\Big)S_i\Big]\\
			&=:C\Big[1+\overline{G}_i(v,t_2)\Big],
		\end{aligned}
	\end{equation}
	for any $t_1,t_2\in [0,T]$, thus 
	\begin{equation}
		G_i(v,t_1)\leq C\Big[H_i^2+H_i^2\overline{G}_i(v,t_2)\Big]\leq C\Big[1+G_i(v,t_2)\Big]
	\end{equation}
	and 
	\begin{equation}
		1+G_i(v,t_1)\leq C\Big[H_i^2+H_i^2\overline{G}_i(v,t_2)\Big]\leq C\Big[1+G_i(v,t_2)\Big],
	\end{equation}
	for any $t_1,t_2\in [0,T]$.
\end{document}